\documentclass[amstex,12pt, amssymb]{article}

\usepackage{mathtext}
\usepackage[cp1251]{inputenc}
\usepackage[T2A]{fontenc}
\usepackage[dvips]{graphicx}
\usepackage{amsmath}
\usepackage{amssymb}
\usepackage{amsxtra}
\usepackage{latexsym}
\usepackage{ifthen}

\textheight245mm \textwidth165mm
\parindent5mm
\parskip1mm

\voffset-27.3mm \hoffset-11.5mm

\newcounter{lemma}[section]

\newcounter{corollary}[section]

\newcounter{remark}[section]

\newcounter{theorem}[section]

\newcounter{proposition}[section]

\newcounter{example}

\numberwithin{equation}{section}

\pagestyle{myheadings}

\begin{document}

\markboth{V. DESYATKA, E.~SEVOST'YANOV}{\centerline{ON
CARATH\'{E}ODORY THEOREM ...}}

\def\cc{\setcounter{equation}{0}
\setcounter{figure}{0}\setcounter{table}{0}}

\overfullrule=0pt


\author{ZARINA KOVBA, EVGENY SEVOST'YANOV}

\title{
{\bf ON CARATH\'{E}ODORY THEOREM FOR OPEN DISCRETE UNCLOSED
MAPPINGS}}

\date{\today}
\maketitle

\begin{abstract}
We study mappings satisfying the inverse Poletsky-type inequality in
a domain of the Euclidean space. Such inequalities are well known
and play an important role in the study of quasiconformal and
quasiregular mappings. We consider the case when the mapped domain,
generally speaking, is not locally connected on its boundary. At the
same time, we consider the situation when the mapping is open and
discrete, but may not preserve the boundary of the domain. In terms
of prime ends, we obtain results on the equicontinuity of families
of such mappings in the closure of the definition domain. As a
consequence, we also obtain the corresponding statement for
Orlicz-Sobolev classes.
\end{abstract}

\bigskip
{\bf 2010 Mathematics Subject Classification: Primary 30C65;
Secondary 31A15, 31B25}

\section{Introduction}

A classical result of Carath\'{e}odory asserts the possibility of a
continuous extension to the boundary of conformal mappings in terms
of prime ends. In particular, the following theorem holds; see, for
example, \cite[Theorem~9.4]{CL}.

\medskip
{\bf Theorem~{\bf A.}}{\it\,A conformal mapping $w=f(z)$ of an open
disk $U$ onto a simply connected domain $D$ induces a one-to-one
correspondence between points $e^{i\theta}$ of the unit disk $K$ and
prime ends $P(e^{i\theta})$ of $D.$ Besides that, $C(f,
e^{i\theta})=I(P(e^{i\theta})).$ (Here, as usual, $C(f,
e^{i\theta})$ denotes the cluster set of $f$ at $e^{i\theta},$ and
$I(P(e^{i\theta}))$ is the impression of $P(e^{i\theta})$).}

\medskip
This result also has a fairly well-known generalization in the space
of quasiconformal mappings, see~\cite[Theorem~4.1]{Na}.

\medskip
{\bf Theorem~{\bf B.}}{\it\, Under a quasiconformal mapping $f$ of a
collared domain $D_0$ onto a domain $D,$ there exists a one-to-one
correspondence between the boundary points of $D_0$ and the prime
ends of $D.$ Moreover, the cluster set $C(f, b),$ $b\in \partial
D_0,$ coincides with the impression $I(P)$ of the corresponding
prime end $P$ of $D.$}

\medskip
In some cases, one can say that a continuous correspondence to the
boundary is even equicontinuous with respect to the points of the
boundary, see, for example, ~\cite[Theorem~3.1]{NP}.

\medskip
{\bf Theorem~{\bf С.} (N\"{a}kki--Palka).}{\it\, Let $\frak{F}$ be a
family of $K$-quasiconformal mappings of a domain $D\ne
\overline{{\Bbb R}^n}$ onto a domain $D^{\,\prime}$ and let either
$D$ or $D^{\,\prime}$ be quasiconformally collared on the boundary.
Then $\frak{F}$ is uniformly equicontinuous if and only if each
$f\in\frak{F}$ can be extended to a continuous mapping of
$\overline{D}$ onto $\overline{D^{\,\prime}}$ and
$\inf\limits_{\frak{F}} h(f(A))>0$ for some continuum $A$ in $D.$}

\medskip Here $h(f(A))$ denotes the chordal diameter of the set
$f(A)\subset\overline{{\Bbb R}^n},$ see
e.g.~\cite[Definition~12.1]{Va}. Unfortunately, we are not aware of
similar results on equicontinuity in terms of prime ends for
quasiregular mappings. At the same time, the second co-author have
proved the latter result for mappings more general than
quasiconformal in several different situations. In particular, it
was considered the case of homemorphisms, and, some later, open
discrete and closed mappings (\cite{ISS}, \cite{Sev$_1$}). The aim
of the present paper is to establish an analogue of Theorems~{\bf
A--C} for non-closed mappings and in the case of prime ends. In
particular, we obtain a general result that guarantees the
equicontinuity of a certain family in the neighborhood of a boundary
point. We emphasize that investigating the case of unclosed mappings
is extremely difficult, since all currently available results, in
one way or another, involve a technique that assumes that the
boundary preserves under mappings. In particular, path lifting
approach, used by us and some another authors, assumes boundary
preserving, as a rule (see, e.g., \cite{Cr$_1$}--\cite{Cr$_2$},
\cite{Sev$_1$}, \cite{Vu}). In \cite{DS} we have proved similar
result for ``good domains'', but for points, not prime ends.  This
manuscript is devoted to the ``prime ends case''.

\medskip
Let us recall some definitions. Let $y_0\in {\Bbb R}^n,$
$0<r_1<r_2<\infty$ and
\begin{equation}\label{eq1**}
A=A(y_0, r_1,r_2)=\left\{ y\,\in\,{\Bbb R}^n:
r_1<|y-y_0|<r_2\right\}\,.\end{equation}
Given sets $E,$ $F\subset\overline{{\Bbb R}^n}$ and a domain
$D\subset {\Bbb R}^n$ we denote by $\Gamma(E,F,D)$ a family of all
paths $\gamma:[a,b]\rightarrow \overline{{\Bbb R}^n}$ such that
$\gamma(a)\in E,\gamma(b)\in\,F $ and $\gamma(t)\in D$ for $t \in
(a, b).$ If $f:D\rightarrow {\Bbb R}^n,$ $y_0\in \overline{f(D)}$
and $0<r_1<r_2<d_0=\sup\limits_{y\in f(D)}|y-y_0|,$ then by
$\Gamma_f(y_0, r_1, r_2)$ we denote the family of all paths $\gamma$
in $D$ such that $f(\gamma)\in \Gamma(S(y_0, r_1), S( y_0, r_2),
A(y_0,r_1,r_2)).$ Let $Q:{\Bbb R}^n\rightarrow [0, \infty]$ be a
Lebesgue measurable function. We say that {\it $f$ satisfies
Poletsky inverse inequality} at the point $y_0\in \overline{f(D)},$
if the relation
\begin{equation}\label{eq2*A}
M(\Gamma_f(y_0, r_1, r_2))\leqslant \int\limits_{A(y_0,r_1,r_2)\cap
f(D)} Q(y)\cdot \eta^n (|y-y_0|)\, dm(y)
\end{equation}
holds for any $0<r_1<r_2<\infty$ and any Lebesgue measurable
function $\eta: (r_1,r_2)\rightarrow [0,\infty ]$ such that
\begin{equation}\label{eqA2}
\int\limits_{r_1}^{r_2}\eta(r)\, dr\geqslant 1\,.
\end{equation}
Observe that, all quasiconformal mappings and also quasiregular
mappings satisfy the relation~(\ref{eq2*A}) with some function~$Q,$
see sections 1.11 and 1.12 in \cite{Sev$_2$}.

\medskip
\begin{remark}\label{rem1}
Note that all quasiregular mappings $f:D\rightarrow {\Bbb R}^n$
satisfy the condition
\begin{equation}\label{eq22}
M(\Gamma_f(y_0, r_1, r_2))\leqslant \int\limits_{f(D)\cap
A(y_0,r_1,r_2)} K_O\cdot N(y, f, D)\cdot \eta^n (|y-y_0|)\, dm(y)
\end{equation}
at each point $y_0\in \overline{f(D)}\setminus\{\infty\}$ with some
constant $K_O=K_O(f)\geqslant 1$ and an arbitrary
Lebesgue-dimensional function $\eta: (r_1,r_2)\rightarrow
[0,\infty]$ which satisfies condition~(\ref{eqA2}). Indeed,
quasiregular mappings satisfy the condition
\begin{equation}\label{eq24}
M(\Gamma_f(y_0, r_1, r_2))\leqslant \int\limits_{f(D)\cap
A(y_0,r_1,r_2)} K_O\cdot N(y, f, D)\cdot (\rho^{\,\prime})^n(y)\,
dm(y)
\end{equation}
for an arbitrary function $\rho^{\,\prime}\in{\rm adm}\,
f(\Gamma_f(y_0, r_1, r_2)),$ see \cite[Remark~2.5.II]{Ri}. Put
$\rho^{\,\prime}(y):=\eta(|y-y_0|)$ for $y\in A(y_0,r_1,r_2)\cap
f(D),$ and $\rho^{\,\prime}(y)=0$ otherwise. By Luzin theorem, we
may assume that the function $\rho^{\,\prime}$ is Borel measurable
(see, e.g., \cite[Section~2.3.6]{Fe}). Due
to~\cite[Theorem~5.7]{Va},
$$\int\limits_{\gamma_*}\rho^{\,\prime}(y)\,|dy|\geqslant
\int\limits_{r_1}^{r_2}\eta(r)\,dr\geqslant 1$$
for each (locally rectifiable) path $\gamma_*$ in $\Gamma(S(y_0,
r_1), S(y_0, r_2), A(y_0, r_1, r_2)).$ By substituting the function
$\rho^{\,\prime}$ mentioned above into~(\ref{eq24}), we obtain the
desired ratio~(\ref{eq22}).
\end{remark}

\medskip
Recall that a mapping $f:D\rightarrow {\Bbb R}^n$ is called {\it
discrete} if the pre-image $\{f^{-1}\left(y\right)\}$ of each point
$y\,\in\,{\Bbb R}^n$ consists of isolated points, and {\it is open}
if the image of any open set $U\subset D$ is an open set in ${\Bbb
R}^n.$ Later, in the extended space $\overline{{{\Bbb R}}^n}={{\Bbb
R}}^n\cup\{\infty\}$ we use the {\it spherical (chordal) metric}
$h(x,y)=|\pi(x)-\pi(y)|,$ where $\pi$ is a stereographic projection
$\overline{{{\Bbb R}}^n}$ onto the sphere
$S^n(\frac{1}{2}e_{n+1},\frac{1}{2})$ in ${{\Bbb R}}^{n+1},$ namely,
\begin{equation*}\label{eq3C}
h(x,\infty)=\frac{1}{\sqrt{1+{|x|}^2}}\,,\qquad
h(x,y)=\frac{|x-y|}{\sqrt{1+{|x|}^2} \sqrt{1+{|y|}^2}}\,, \quad x\ne
\infty\ne y
\end{equation*}
(see \cite[Definition~12.1]{Va}). Further, the closure
$\overline{A}$ and the boundary $\partial A$ of the set $A\subset
\overline{{\Bbb R}^n}$ we understand relative to the chordal metric
$h$ in $\overline{{\Bbb R}^n}.$

\medskip
The boundary of $D$ is called {\it weakly flat} at the point $x_0\in
\partial D,$ if for every $P>0$ and for any neighborhood $U$
of the point $x_0$ there is a neighborhood $V\subset U$ of the same
point such that $M(\Gamma(E, F, D))>P$ for any continua $E, F\subset
D$ such that $E\cap\partial U\ne\varnothing\ne E\cap\partial V$ and
$F\cap\partial U\ne\varnothing\ne F\cap\partial V.$ The boundary of
$D$ is called weakly flat if the corresponding property holds at any
point of the boundary $D.$ Given a mapping $f:D\rightarrow {\Bbb
R}^n$, we denote
\begin{equation*}\label{eq1_A_4} C(f, x):=\{y\in \overline{{\Bbb
R}^n}:\exists\,x_k\in D: x_k\rightarrow x, f(x_k) \rightarrow y,
k\rightarrow\infty\}
\end{equation*}
and
\begin{equation*}\label{eq1_A_5} C(f, \partial
D)=\bigcup\limits_{x\in \partial D}C(f, x)\,.
\end{equation*}
In what follows, ${\rm Int\,}A$ denotes the set of inner points of
the set $A\subset \overline{{\Bbb R}^n}.$ Recall that the set
$U\subset\overline{{\Bbb R}^n}$ is a neighborhood of the point
$z_0,$ if $z_0\in {\rm Int\,}A.$

\medskip
Consider the following definition, which goes back to
N\"akki~\cite{Na}, cf.~\cite{KR}. The boundary of a domain $D$ in
${\Bbb R}^n$ is said to be {\it locally quasiconformal} if every
$x_0\in\partial D$ has a neighborhood $U$ that admits a
quasiconformal mapping $\varphi$ onto the unit ball ${\Bbb
B}^n\subset{\Bbb R}^n$ such that $\varphi(\partial D\cap U)$ is the
intersection of ${\Bbb B}^n$ and a coordinate hyperplane. The
sequence of cuts $\sigma_m,$ $m=1,2,\ldots ,$ is called {\it
regular,} if
$\overline{\sigma_m}\cap\overline{\sigma_{m+1}}=\varnothing$ for
$m\in {\Bbb N}$ and, in addition, $d(\sigma_{m})\rightarrow 0$ as
$m\rightarrow\infty.$ If the end $K$ contains at least one regular
chain, then $K$ will be called {\it regular}. We say that a bounded
domain $D$ in ${\Bbb R}^n$ is {\it regular}, if $D$ can be
quasiconformally mapped to a domain with a locally quasiconformal
boundary whose closure is a compact in ${\Bbb R}^n,$ and, besides
that, every prime end in $D$ is regular. Note that, the space
$\overline{D}_P=D\cup E_D$ is metric, which can be demonstrated as
follows. If $g:D_0\rightarrow D$ is a quasiconformal mapping of a
domain $D_0$ with a locally quasiconformal boundary onto some domain
$D,$ then for $x, y\in \overline{D}_P$ we put:
\begin{equation}\label{eq5M}
\rho(x, y):=|g^{\,-1}(x)-g^{\,-1}(y)|\,,
\end{equation}
where the element $g^{\,-1}(x),$ $x\in E_D,$ is to be understood as
some (single) boundary point of the domain $D_0.$ The specified
boundary point is unique and well-defined by~\cite[Theorem~2.1,
Remark~2.1]{IS}, cf.~\cite[Theorem~4.1]{Na}. It is easy to verify
that~$\rho$ in~(\ref{eq5M}) is a metric on $\overline{D}_P.$

\medskip
Assume that, $D^{\,\prime}\setminus C(f, \partial
D)=\bigcup\limits_{i=1}^ND_i,$ where $D_i$ is a regular domain for
$1\leqslant i\leqslant N,$ $D_i\cap D_j=\varnothing$ for $i\ne j.$
Let $\rho_i$ is a metric in $\overline{D_i}_P$ defined
by~(\ref{eq5M}). We set $\rho_j(x, y)=0$ for $x, y\in
\bigcup\limits_{i=1}^N\overline{D_i}_P\setminus \overline{D_j}_P,$
and $\rho_j(x, y)=1$ for $x\in \overline{D_j}_P$ and $y\not\in
\overline{D_j}_P.$ Then $\rho_j$ is a pseudometric on
$\bigcup\limits_{i=1}^N\overline{D_i}_P$ (see
\cite[2.2.21.XV]{Ku$_1$}). Set
\begin{equation}\label{eq1}
\rho(x, y)=\sum\limits_{i=1}^N2^{\,-i}\frac{\rho_i(x,
y)}{1+\rho_i(x, y)}\,.
\end{equation}
Observe that, $\rho^{\,*}_j(x, y):=\frac{\rho_j(x, y)}{1+\rho_j(x,
y)}$ is a metric on $\overline{D_j}_P$ for $1\leqslant j\leqslant N$
(see \cite[21.2.V]{Ku$_1$}). In addition, the space
$(\overline{D_{j}}_P, \rho^{\,*}_j)$ which is homeomorphic to
$(\overline{D_{j}}_P, \rho_j)$ (see \cite[21.2.V]{Ku$_1$}). Besides
that, $\rho(x, y)$ is a metric on
$\bigcup\limits_{i=1}^N\overline{D_i}_P,$ see
\cite[Remark~2.2.21.XV]{Ku$_1$}.

\medskip
\begin{remark}\label{rem3}
Observe that, the space $\bigcup\limits_{i=1}^N\overline{D_i}_P$ is
compact. Indeed, if $x_k\in \bigcup\limits_{i=1}^N\overline{D_i}_P,$
then there is $1\leqslant j_0\leqslant N$ and a subsequence
$x_{k_l},$ $l=1,2,\ldots ,$ such that $x_{k_l}\in
\overline{D_{j_0}}_P$ for any $l\in {\Bbb N}.$ By the definition of
the prime end space, we may find a sequence $y_l\in D_{j_0},$
$l=1,2,\ldots ,$ such that $\rho_{j_0}(y_l, x_{k_l})<1/l$ for $l\in
{\Bbb N}.$ Since by the definition the domain $D_{j_0}$ is bounded,
we may consider that $y_l\rightarrow y_0\in \overline{D_{j_0}}.$
There are two cases: $y_0\in D_{j_0},$ or $y_0\in \partial D_{j_0}.$
In the first case, when $y_0\in D_{j_0},$ by the definition of
$\rho_{j_0}$ in~(\ref{eq5M}), we have that $\rho_{j_0}(y_l,
y_0)\rightarrow 0$ as $l\rightarrow \infty.$ Now, by the triangle
inequality $\rho_{j_0}(x_{k_l}, y_0)\leqslant \rho_{j_0}(x_{k_l},
y_l)+\rho_{j_0}(y_l, y_0)<\frac{1}{l}+\rho_{j_0}(y_l,
y_0)\rightarrow 0$ as $l\rightarrow\infty.$ Thus,
$x_{k_l}\rightarrow y_0$ as $l\rightarrow \infty$ in the
metric~$\rho_{j_0}.$ By the definition of the metric $\rho$
in~(\ref{eq1}) $x_{k_l}\rightarrow y_0$ as $l\rightarrow \infty$ in
the metric~$\rho,$ as well. In the second case, when $y_0\in
\partial D_{j_0},$ we consider the homeomorphism $g:D_0\rightarrow
D_{j_0}$ of a domain $D_0$ with a locally quasiconformal boundary
onto $D_{j_0}.$ Such a domain $D_0$ and a homeomorphism $g$ exist by
the definition of a regular domain. Since $D_0$ is bounded, we may
consider that $g^{\,-1}(x_{k_l})\rightarrow z_0\in \partial D_0$ as
$l\rightarrow\infty.$ Set $P_0=g(z_0)\in E_{D_{j_0}}.$ The prime end
$P_0$ is well-defined due to~\cite[Theorem~2.1, Remark~2.1]{IS},
cf.~\cite[Theorem~4.1]{Na}. In addition, by the definition of the
metrics $\rho_{j_0},$ $\rho_{j_0}(x_{k_l}, P_0)\rightarrow 0$ as
$l\rightarrow\infty.$ By the definition of $\rho$ in~(\ref{eq1}),
$x_{k_l}\rightarrow P_0$ as $l\rightarrow \infty$ by the
metric~$\rho,$ as well.
\end{remark}

Given a Lebesgue measurable function $Q:{\Bbb R}^n\rightarrow [0,
\infty],$ $r>0$ and a point $y_0\in {\Bbb R}^n$ we set
$\widetilde{Q}(y)=\max\{Q(y), 1\}$ and
\begin{equation}\label{eq2B}
\widetilde{q}_{y_0}(r)=\frac{1}{\omega_{n-1}r^{n-1}}
\int\limits_{S(y_0, r)}\widetilde{Q}(y)\,d\mathcal{H}^{n-1}\,.
\end{equation}
Given a number $\delta>0,$ domains $D, D^{\,\prime}\subset{\Bbb
R}^n,$ $n\geqslant 2,$ a set $E\subset \overline{D^{\,\prime}},$
closed in $\overline{{\Bbb R}^n},$ such that $\partial
D^{\,\prime}\subset E,$ a set $E_0\subset D,$ a finite set of points
$P=\{a_i\}_{i=1}^{S}\subset D^{\,\prime}\setminus E,$ $0\leqslant
i\leqslant S,$ and a Lebesgue measurable function $Q:{\Bbb
R}^n\rightarrow [0, \infty],$ denote by ${\frak S}^P_{E, E_0 \delta,
Q}(D, D^{\,\prime})$ a family of all open discrete mappings $f$ of
$D$ onto $D^{\,\prime}$ satisfying the
conditions~(\ref{eq2*A})--(\ref{eqA2}) for all $0<r_1<r_2<\infty$
each $y_0\in \overline{D^{\,\prime}}$ such that

\medskip
1) $C(f, \partial D)\subset E,$

\medskip
2) $f^{\,-1}(E\cap D^{\,\prime})\subset E_0,$

\medskip
3) $h(f^{\,-1}(a_i), \partial D)\geqslant \delta$ for all $i\in
{\Bbb N}$ (here $h(A, B)$ denotes the chordal distance between sets
in $\overline{{\Bbb R}^n},$ $h(A, B)=\inf\limits_{x\in A, y\in
B}h(x, y)$).

The following theorem in some versions (in particular, for
homeomorphisms and closed maps) was proved by the second co-author,
see, e.g., \cite{SSD}, \cite{SevSkv$_1$}, \cite{SevSkv$_2$}, and
\cite{SevSkv$_3$}. The result formulated below takes into account
the most general situation, when the mapping is neither a
homeomorphism nor closed (=boundary preserving).

\medskip
\begin{theorem}\label{th2}
{\it\, Let $D$ and $D^{\,\prime}$ be domains in ${\Bbb R}^n,$
$n\geqslant 2,$ let $D$ be a domain with a weakly flat boundary and
let $E$ be a set in $\overline{D^{\,\prime}},$ which is closed in
$\overline{{\Bbb R}^n}$ and such that $\partial D^{\,\prime}\subset
E.$ Suppose that the following conditions are fulfilled:

\medskip
1) for each point $y_0\in \partial D^{\,\prime}\setminus\{\infty\}$
either

\medskip
1a) there is $0<r_0<\sup\limits_{y\in D^{\,\prime}}|y-y_0|$ such
that, for each $0<r_1<r_2<r_0$ there exists a set $E_1\subset[r_1,
r_2]$ of a positive linear Lebesgue measure is that the function $Q$
is integrable on $S(y_0, r)$ for each $r\in E_1;$ or

\medskip
1b) \begin{equation}\label{eq6A}
\int\limits_{0}^{\delta(y_0)}\frac{dt}{t
\widetilde{q}_{y_0}^{\,\frac{1}{n-1}}(t)}=\infty\,;
\end{equation}

\medskip
2) $D^{\,\prime}\setminus E$ consists of finite number of pairwise
disjoint domains $D_1, D_2,\ldots, D_N$ any of which is regular and,
besides that, for any $1\leqslant i\leqslant N$ there is $a_i\in P$
such that $a_i\in D_i;$

\medskip
3) the set $E_0$ is nowhere dense in $D.$

\medskip
Then the family ${\frak S}^P_{E, E_0 \delta, Q}(D, D^{\,\prime})$ is
equicontinuous at every point $x_0\in \partial D$ with respect to
$D\setminus E_0,$ i.e., for every $\varepsilon>0$ there is
$\delta=\delta(x_0, \varepsilon)>0$ such that $\rho(f(x),
f(y))<\varepsilon$ for any $x, y\in B(x_0, \delta)\cap (D\setminus
E_0)$ and any $f\in {\frak S}^P_{E, E_0 \delta, Q}(D,
D^{\,\prime}),$ where the metric $\rho$ is defined in~(\ref{eq1}).
In particular, any $f\in {\frak S}^P_{E, E_0 \delta, Q}(D,
D^{\,\prime})$ has a continuous extension to $x_0$ with respect to
$D\setminus E_0$ in terms of the space
$\bigl(\bigcup\limits_{i=1}^N\overline{D_i}_P, \rho\bigr).$ }
\end{theorem}

\medskip
\begin{remark}\label{rem2}
The condition 1) of Theorem~\ref{th2} may be replaced on a simpler
one: $Q\in L^1(D^{\,\prime}).$ Indeed, let $0<r_0<\sup\limits_{y\in
D^{\,\prime}}|y-y_0|.$ We may assume that $Q$ is extended by zero
outside $D^{\,\prime}.$ By the Fubini theorem (see, e.g.,
\cite[Theorem~8.1.III]{Sa}) we obtain that
$$\int\limits_{r_1<|y-y_0|<r_2}Q(y)\,dm(y)=\int\limits_{r_1}^{r_2}
\int\limits_{S(y_0, r)}Q(y)\,d\mathcal{H}^{n-1}(y)dr<\infty\,.$$
This means the fulfillment of condition~1) in
Theorem~\ref{th2}.~$\Box$
\end{remark}

\section{Preliminaries}

The following statement holds, see, e.g.,
\cite[Theorem~1.I.5.46]{Ku$_2$}).

\medskip
\begin{proposition}\label{pr2}
{\it\, Let $A$ be a set in a topological space $X.$ If the set $C$
is connected and $C\cap A\ne \varnothing\ne C\setminus A,$ then
$C\cap
\partial A\ne\varnothing.$}
\end{proposition}

\medskip
The following lemma was proved in \cite[Lemma~5.1]{DS}.

\medskip
\begin{lemma}\label{lem3}
{\it\, Let $D$ be a domain in ${\Bbb R}^n,$ $n\geqslant 2,$ and let
the points $a\in D, b\in \overline{D},$ and $c\in D, d\in
\overline{D}$ be such that there are paths $\gamma_1:[0,
1]\rightarrow \overline{D}$ and $\gamma_2:[0, 1]\rightarrow
\overline{D},$ such that $\gamma_i(t)\in D$ for all $t\in (0, 1),$
$i=1,2,$ $\gamma_1(0)=a,$ $\gamma_1(1)=b,$ $\gamma_2(0)= c,$
$\gamma_2(1)=d.$ Then there are similar disjoint paths
$\gamma^{\,*}_1$ and $\gamma^{\,*}_2.$

Moreover, if $\gamma_1$ is a Jordan path, then we may take
$\gamma_1$ as $\gamma^{\,*}_1,$ and as $\gamma^{\,*}_2$ we may take
a path which coincides with $\gamma_2$ on some segments $[0, t_0]$
and $[T_0, 1],$ where $0<t_0<T_0<1.$}
\end{lemma}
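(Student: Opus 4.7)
My strategy is to derive the first assertion from the ``moreover'' clause, so the real content lies in the case when $\gamma_1$ is already a Jordan arc. If it is not, I would first replace $\gamma_1$ by a Jordan subpath joining $a$ to $b$, using the classical fact that the image of any continuous path between two distinct points in a Hausdorff space contains a Jordan arc with the same endpoints (see, e.g., Kuratowski, \emph{Topology}, Vol.~II). After this reduction, we may set $\gamma_1^*:=\gamma_1$ and focus on modifying $\gamma_2$.

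Next, I would localize near the endpoints $c$ and $d$ of $\gamma_2$. Since $c\in D$ and $c$ lies outside the Jordan arc $\gamma_1([0,1])$ (which is forced by any hope of disjointness), $c$ has an open Euclidean neighborhood $V_c\subset D$ disjoint from $\gamma_1$. By continuity of $\gamma_2$ there exists $t_0\in(0,1)$ with $\gamma_2([0,t_0])\subset V_c$. In the same way I choose $T_0\in(t_0,1)$ with $\gamma_2([T_0,1])$ contained in a neighborhood $V_d$ of $d$ disjoint from $\gamma_1$. On both end segments $\gamma_2$ is then already disjoint from $\gamma_1$ and will be kept unchanged in $\gamma_2^*$.

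The crux is to join the interior points $p:=\gamma_2(t_0)$ and $q:=\gamma_2(T_0)$ by a path in $D\setminus\gamma_1([0,1])$. For this I would invoke the fact that a Jordan arc cannot separate a domain in ${\Bbb R}^n$ for $n\geqslant 2$: when $n\geqslant 3$ this is immediate since the arc has codimension $\geqslant 2$, and when $n=2$ it follows from non-separation of the plane by a Jordan arc. Since $D$ is open and connected, $D\setminus\gamma_1$ is then also path-connected, so $p$ and $q$ may be joined by a path $\beta$ in $D\setminus\gamma_1$. Concatenating $\gamma_2|_{[0,t_0]}$, $\beta$ and $\gamma_2|_{[T_0,1]}$ gives $\gamma_2^*$ with the required properties.

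The main obstacle I expect is the non-separation step when $n=2$, combined with the possibility that the endpoint $b$ of $\gamma_1$ lies on $\partial D$: one must verify that removing such a half-open arc from the domain still leaves a path-connected set. A clean way is to thicken $\gamma_1$ slightly to an open tubular neighborhood $W$ in ${\Bbb R}^n$ that deformation-retracts onto $\gamma_1$, and then show that any two points of $D$ can be joined by a polygonal path in $D\setminus W$. Once this connectedness is in hand, the continuity and concatenation steps are routine.
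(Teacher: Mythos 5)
The paper itself contains no proof of this lemma --- it is imported verbatim from \cite[Lemma~5.1]{DS} --- so there is no in-paper argument to compare against, and I assess your proposal on its own merits. Your overall plan is reasonable: extracting a Jordan arc from $|\gamma_1|$ (legitimate, since the image of a path is a Peano continuum and hence arcwise connected, and the extracted arc still has its interior in $D$ because $|\gamma_1|\subset D\cup\{b\}$), keeping the two end pieces of $\gamma_2$ inside small neighborhoods of $c$ and $d$ disjoint from $|\gamma_1^*|$, and rerouting the middle through $D\setminus|\gamma_1^*|$. The reduction of the first assertion to the ``moreover'' clause is also fine, modulo the implicit (and necessary) assumption that $c,d\notin|\gamma_1^*|$ and the endpoint sets are disjoint, which you correctly flag as forced by the conclusion.

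The genuine gap is exactly at the step you identify as the main obstacle, and the repair you propose does not close it. For $n=2$ what you need is not that a Jordan arc fails to separate ${\Bbb R}^2$, but that it fails to separate the subdomain $D$; these are different statements, and the second one is \emph{false} when both endpoints of the arc lie on $\partial D$ (a diameter separates the disk). Your tubular-neighborhood argument cannot be correct as written precisely because it never uses the hypothesis that only one endpoint of $\gamma_1^*$ touches $\partial D$: the same thickening-plus-polygonal-path construction would ``prove'' the false statement for a diameter; moreover, for a fixed thickening $W$ the set $D\setminus W$ may fail to contain $p$ or $q$ or may be disconnected, and letting $W$ shrink merely returns you to the claim being proved. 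The standard way to close the gap is Janiszewski's theorem on $S^2$: put $A:=S^2\setminus D$ and $B:=|\gamma_1^*|$; then $A\cap B\subset\{b\}$ is connected, $A$ does not separate $p$ from $q$ (both lie in the connected set $D=S^2\setminus A$), and $B$ separates no pair of points of $S^2$, whence $A\cup B$ does not separate $p$ from $q$, i.e.\ $D\setminus|\gamma_1^*|$ is connected and, being open, path-connected. Note how this argument uses, in an essential way, that $|\gamma_1^*|$ meets $S^2\setminus D$ in at most one point. For $n\geqslant 3$ your dimension argument is correct once phrased via topological dimension (the arc may be wild, but it has dimension $1\leqslant n-2$, and a relatively closed subset of a domain of dimension at most $n-2$ does not disconnect it); ``codimension'' in the smooth sense is not available here. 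With Janiszewski substituted for the tube, your proof goes through.
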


\medskip The following lemma was proved in \cite[Lemma~2.1]{ISS} for
regular domains $D,$ cf. \cite[Lemma~12.1]{Sev$_2$}. We need it in
more general situation.

\medskip
\begin{lemma}\label{lem1}{\it\,
Let $D^{\,\prime}\subset {\Bbb R}^n,$ $n\geqslant 2,$ let $E$ be a
set in $\overline{D^{\,\prime}},$ which is closed in
$\overline{{\Bbb R}^n}$ and such that $\partial D^{\,\prime}\subset
E$ and that $D^{\,\prime}\setminus E$ consists of finite number of
pairwise disjoint domains $D_1, D_2,\ldots, D_N$ any of which is
regular. Let $1\leqslant j_1, j_2\leqslant N$ and let
$x_m\rightarrow P_1\in {\overline{D}_{j_1}}_P,$ $y_m\rightarrow
P_2\in {\overline{D}_{j_2}}_P$ as $m\rightarrow\infty,$ $P_1\ne
P_2.$ Suppose that $d_m, g_m,$ $m=1,2,\ldots,$ are sequences of
descending domains, corresponding to $P_1$ and $P_2,$ $d_1\cap
g_1=\varnothing,$ and $x_0\in D_{j_1}\setminus (d_1\cup g_1),$
$y_0\in D_{j_2}\setminus (d_1\cup g_1).$ Then there are arbitrarily
large $k_0\in {\Bbb N},$ $M_0=M_0(k_0)\in {\Bbb N}$ and
$0<t_1=t_1(k_0), t_2=t_2(k_0)<1$ for which the following condition
is fulfilled: for each $m\geqslant M_0$ there are
disjoint paths
\begin{equation}\label{eq1A}
\gamma_{1,m}(t)=\quad\left\{
\begin{array}{rr}
\widetilde{\alpha}(t), & t\in [0, t_1],\\
\widetilde{\alpha_m}(t), & t\in [t_1, 1]\end{array}
\right.\,,\quad\gamma_{2,m}(t)=\quad\left\{
\begin{array}{rr}
\widetilde{\beta}(t), & t\in [0, t_2],\\
\widetilde{\beta_m}(t), & t\in [t_2, 1]\end{array}\,, \right.
\end{equation}
such that:

1) $\gamma_{1, m}(0)=x_0,$ $\gamma_{1, m}(1)=x_m,$ $\gamma_{2,
m}(0)=y_0$ and $\gamma_{2, m}(1)=y_m;$

2) $|\gamma_{1, m}|\cap \overline{g_{k_0}}=\varnothing=|\gamma_{2,
m}|\cap \overline{d_{k_0}};$

3) $\widetilde{\alpha_m}(t)\in d_{k_0+1}$ for $t\in [t_1, 1]$ and
$\widetilde{\beta_m}(t)\in g_{k_0+1}$ for $t\in [t_2, 1]$ (see
Figure~\ref{fig3}).
\begin{figure}[h]
\centering\includegraphics[width=200pt]{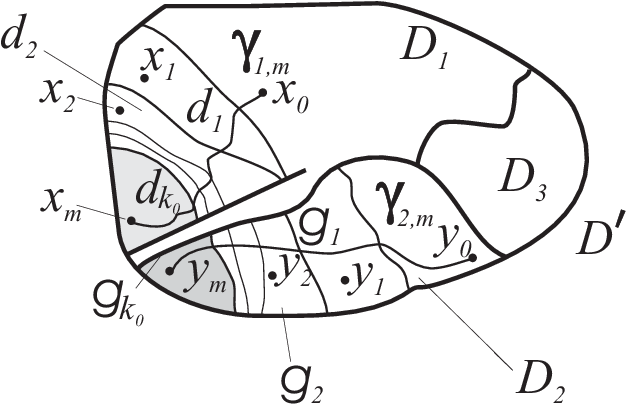} \caption{To
the statement of Lemma~\ref{lem1}}\label{fig3}
\end{figure}
}
\end{lemma}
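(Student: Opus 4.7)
The plan is to exploit the nested structure of the descending chains $\{d_k\}$ and $\{g_k\}$ converging to the prime ends $P_1$ and $P_2$. Each required path $\gamma_{i,m}$ will be split into a fixed ``head'' from $x_0$ (resp.\ $y_0$) up to a reference point inside the cut $d_{k_0+1}$ (resp.\ $g_{k_0+1}$), followed by a varying ``tail'' lying entirely inside that cut and terminating at $x_m$ (resp.\ $y_m$). The auxiliary pieces $\widetilde{\alpha}$ and $\widetilde{\beta}$ of~(\ref{eq1A}) will be the heads, and $\widetilde{\alpha_m}$, $\widetilde{\beta_m}$ will be the $m$-dependent tails.

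First I would choose $k_0$ so large that $\overline{d_{k_0}}\cap\overline{g_{k_0}}=\varnothing$, that $x_0,y_0\not\in\overline{d_{k_0}}\cup\overline{g_{k_0}}$, and that the chains satisfy $\overline{d_{k_0+1}}\subset d_{k_0}$ and $\overline{g_{k_0+1}}\subset g_{k_0}$. This is possible since the cuts in a regular prime end have shrinking diameter and $P_1\ne P_2$. Using the convergence $x_m\to P_1$, $y_m\to P_2$ in the metrics $\rho_{j_1},\rho_{j_2}$ from~(\ref{eq5M}), one obtains $x_m\in d_{k_0+1}$ and $y_m\in g_{k_0+1}$ for every $m\ge M_0$ with some $M_0=M_0(k_0)$.

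Next, I fix reference points $a\in d_{k_0+1}$, $b\in g_{k_0+1}$ and choose a path $\widetilde{\alpha}$ in $\overline{D_{j_1}}$ from $x_0$ to $a$ and a path $\widetilde{\beta}$ in $\overline{D_{j_2}}$ from $y_0$ to $b$ such that on some terminal subinterval $[t_1,1]$ (respectively $[t_2,1]$) the path lies inside $d_{k_0+1}$ (respectively $g_{k_0+1}$), and such that $\widetilde{\alpha}$ avoids $\overline{g_{k_0}}$ and $\widetilde{\beta}$ avoids $\overline{d_{k_0}}$; this is possible because of the disjointness arranged in the previous step and the connectedness of the relevant domains. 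If $j_1\ne j_2$ the paths $\widetilde{\alpha},\widetilde{\beta}$ are automatically disjoint since they live in disjoint components. If $j_1=j_2$, I apply Lemma~\ref{lem3} inside that single component to replace $\widetilde{\beta}$ by a disjoint path with the same endpoints, keeping $\widetilde{\beta}$ unchanged on $[0,t_0]\cup[T_0,1]$ for some $0<t_0<T_0<1$; by enlarging $t_2$ to satisfy $t_2\ge T_0$, the terminal segment that lies in $g_{k_0+1}$ is preserved. For every $m\ge M_0$ I then connect $a$ to $x_m$ by a path $\widetilde{\alpha_m}\subset d_{k_0+1}$ and $b$ to $y_m$ by $\widetilde{\beta_m}\subset g_{k_0+1}$ (possible since these cuts are connected open sets), and concatenate to obtain $\gamma_{1,m}$ and $\gamma_{2,m}$ of the form~(\ref{eq1A}). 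The required properties~1)--3) are then immediate: the tails sit in the disjoint cuts $d_{k_0+1}, g_{k_0+1}$, which by nesting do not meet $\overline{g_{k_0}}, \overline{d_{k_0}}$ respectively; and the heads were arranged to be both mutually disjoint and to miss the opposite cut.

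The main obstacle is the case $j_1=j_2$: I must ensure that the disjoint-path replacement provided by Lemma~\ref{lem3} is compatible with the requirement that both terminal segments still land inside the prescribed cuts. This forces the choice of $t_1,t_2$ to be made \emph{after} the disjointification, exploiting the flexibility that Lemma~\ref{lem3} changes $\gamma_2$ only on an intermediate subinterval. A secondary technical point is justifying the nested-closure relations $\overline{d_{k_0+1}}\subset d_{k_0}$, which follows from transferring the problem, via the quasiconformal embedding defining a regular prime end, to a domain with locally quasiconformal boundary where descending neighbourhoods are standard.
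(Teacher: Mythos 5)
Your overall architecture (fixed ``head'' $\widetilde{\alpha},\widetilde{\beta}$ plus an $m$-dependent ``tail'' inside $d_{k_0+1}$, $g_{k_0+1}$) matches the shape of~(\ref{eq1A}), and your treatment of the case $j_1\ne j_2$ coincides with the paper's: there the disjointness of $\gamma_{1,m}$ and $\gamma_{2,m}$ is automatic because $D_{j_1}\cap D_{j_2}=\varnothing$ (and in fact $\overline{D_{j_2}}\cap D_{j_1}=\varnothing$). The difference is that for the hard case $j_1=j_2$ the paper does not argue at all: it simply cites Lemma~2.1 of \cite{ISS}, where this construction is carried out in full. You instead attempt a self-contained proof of that case via Lemma~\ref{lem3}, and that is where a genuine gap appears.

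The gap is in securing condition~2) after disjointification. Lemma~\ref{lem3} guarantees that the replacement path $\gamma^{\,*}_2$ agrees with $\gamma_2$ near its endpoints and stays in the ambient domain, but it gives no control whatsoever over where the new \emph{middle} arc travels; in particular the modified $\widetilde{\beta}$ may well cross $\overline{d_{k_0}}$, destroying $|\gamma_{2,m}|\cap\overline{d_{k_0}}=\varnothing$ (and symmetrically for $\widetilde{\alpha}$ and $\overline{g_{k_0}}$, if you disjointify in the other order). Your remark that $t_1,t_2$ should be chosen after the disjointification addresses only the endpoint/tail bookkeeping, not this containment. The natural repair --- note that the modified $|\widetilde{\beta}|$ is a compact subset of $D_{j}$ while $\overline{d_k}$ shrinks to a boundary point, so one may enlarge $k_0$ until $\overline{d_{k_0}}$ misses it --- is circular as stated: enlarging $k_0$ forces a new terminal point $a\in d_{k_0+1}$, hence a new head $\widetilde{\alpha}$, hence a fresh application of Lemma~\ref{lem3}, and the obstruction reappears. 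Breaking this circularity (e.g.\ by performing the disjointification in the subdomain $D_j\setminus(\overline{d_{k_0}}\cup\overline{g_{k_0}})$ for the portions of the heads outside the cuts, or by the ordering of choices used in \cite[Lemma~2.1]{ISS}) is precisely the content that is missing; as written, your argument does not establish property~2) when $j_1=j_2$.
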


\medskip
\begin{proof}
There are two cases: $j_1=j_2,$ or $j_1\ne j_2.$ In the first case,
$j_1=j_2,$ the proof was given in \cite[Lemma~2.1]{ISS}. In the
second case, $j_1\ne j_2,$ the proof is almost obvious. Indeed, fix
$k_0\in {\Bbb N}.$ Since $x_m\rightarrow P_1$ as
$m\rightarrow\infty,$ we may find $M=M(k_0)$ such that $x_m\in
d_{k_0}$ and $y_m\in g_{k_0}$ for $m\geqslant M=M(k_0).$ Let us join
the points $x_0$ and $x_{M(k_0)}$ by a path $\widetilde{\alpha}:[0,
t_1]\rightarrow D_{j_1}$ while $\alpha(0)=x_0$ and
$\widetilde{\alpha}(t_1)=x_{M(k_0)}.$ Since $d_m\subset d_{k_0}$ for
every $m\geqslant k_0,$ we may join the points $x_{M(k_0)}$ and
$x_m$ by a path $\widetilde{\alpha_m}$ inside $d_{k_0}.$ Now
consider a path $\gamma_{1, m}$ defined in~(\ref{eq1A}). Similarly,
we join the points $y_0$ and $y_{M(k_0)}$ by a path
$\widetilde{\beta}:[0, t_2]\rightarrow D_{j_2}$ while
$\widetilde{\beta}(0)=y_0$ and $\beta(t_2)=y_{M(k_0)}.$ Since
$g_m\subset g_{k_0}$ for every $m\geqslant k_0,$ we may join the
points $y_{M(k_0)}$ and $y_m$ by a path $\beta_m$ inside $g_{k_0}.$
Now consider a path $\gamma_{2, m}$ defined in~(\ref{eq1A}). Observe
that the paths $\gamma_{1, m}$ and $\gamma_{2, m}$ are required
because $D_{j_1}\cap D_{j_2}=\varnothing.$~$\Box$
\end{proof}

\medskip
The version of the following Lemma was proved in \cite{ISS} for
homeomorphisms and in \cite{Sev$_1$} for boundary preserving
mappings. Now we need it in the situation, when mappings may not
preserve the boundary of a domain.

\medskip
\begin{lemma}\label{lem4}
{\it Let $D$ and $D^{\,\prime}$ be domains in ${\Bbb R}^n,$
$n\geqslant 2,$ let $E$ be a set in $\overline{D^{\,\prime}},$ which
is closed in $\overline{{\Bbb R}^n}$ and such that $\partial
D^{\,\prime}\subset E$ and that $D^{\,\prime}\setminus E$ consists
of finite number of pairwise disjoint domains $D_1, D_2,\ldots, D_N$
any of which is regular. Assume that, $f$ is an open discrete
mapping of $D$ onto $D^{\,\prime}$ which satisfies the
relation~(\ref{eq2*A}) at every point
$y_0\in\overline{D^{\,\prime}},$ every $0<r_1<r_2<\infty$ and some a
Lebesgue measurable function $Q$ which satisfies condition 1) in
Theorem~\ref{th2}. Let also $d_m$ be a sequence of decreasing
domains which corresponds to a chain of cuts $\sigma_m,$
$m=1,2,\ldots, $ lying onto spheres $S(\overline{x_0}, r_m)$ such
that $\overline{x_0}\in
\partial D^{\,\prime},$ while $r_m\rightarrow 0$ as $m\rightarrow\infty.$
Then under conditions and notions of Lemma~\ref{lem1} we may find
$k_0\in {\Bbb N}$ for which there exists $0<N=N(k_0, Q,
D^{\,\prime})<\infty,$ which does not depend on $m$ and $f$ such
that
$$M(\Gamma_m)\leqslant N,\qquad m\geqslant M_0=M_0(k_0)\,,$$
where $\Gamma_m$ is a family of paths $\gamma:[0, 1]\rightarrow D$
in $D$ such that $f(\gamma)\in \Gamma(|\gamma_{1, m}|, |\gamma_{2,
m}|, D^{\,\prime}).$ }
\end{lemma}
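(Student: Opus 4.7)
The plan is to bound $M(\Gamma_m)$ by decomposing $\Gamma_m$ into finitely many subfamilies whose $f$-images have endpoints separated by a suitable spherical annulus, and then applying the inverse Poletsky inequality~(\ref{eq2*A}) to each. Using the splitting~(\ref{eq1A}) from Lemma~\ref{lem1}, write $|\gamma_{1,m}|=|\widetilde{\alpha}|\cup|\widetilde{\alpha_m}|$ with $|\widetilde{\alpha_m}|\subset d_{k_0+1}$ and $|\widetilde{\alpha}|$ an $m$-independent compact curve (for fixed $k_0$); analogously $|\gamma_{2,m}|=|\widetilde{\beta}|\cup|\widetilde{\beta_m}|$ with $|\widetilde{\beta_m}|\subset g_{k_0+1}$. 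Split $\Gamma_m$ correspondingly into (up to) four subfamilies according to which of these four pieces each $f(\gamma)$ starts and ends on; by subadditivity, $M(\Gamma_m)$ is bounded by the sum of the four subfamily moduli.

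For each subfamily I would choose a point $y_0$ and radii $r_1<r_2$ such that one of the two endpoint-pieces lies inside $B(y_0,r_1)$ while the other is disjoint from $\overline{B(y_0,r_2)}$; then every $\gamma$ in that subfamily lies in $\Gamma_f(y_0,r_1,r_2)$, and~(\ref{eq2*A}) applies directly. For subfamilies in which $f(\gamma)$ touches $|\widetilde{\alpha_m}|$, I take $y_0=\overline{x_0}$, $r_1=r_{k_0+1}$, $r_2=r_{k_0}$; the inclusion $|\widetilde{\alpha_m}|\subset d_{k_0+1}\subset B(\overline{x_0},r_{k_0+1})$ is immediate, and for $k_0$ large enough the other endpoint-piece lies outside $\overline{B(\overline{x_0},r_{k_0})}$, using $|\gamma_{2,m}|\cap\overline{d_{k_0}}=\varnothing$, the distinctness $P_1\ne P_2$, and $r_{k_0}\to 0$. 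A symmetric choice centered at a representative of $P_2$ handles subfamilies involving $|\widetilde{\beta_m}|$ but not $|\widetilde{\alpha_m}|$, while the remaining ``fully fixed'' subfamily (from $|\widetilde{\alpha}|$ to $|\widetilde{\beta}|$) is bounded by any $y_0$ and radii separating these two $m$-independent compacta in $D^{\,\prime}$. In each application of~(\ref{eq2*A}) I take $\eta(r)=\chi_{E_1}(r)\cdot q(r)^{-1/(n-1)}/C$, with $q(r)=\int_{S(y_0,r)}Q\,d\mathcal{H}^{n-1}$ and $C>0$ normalizing $\int_{r_1}^{r_2}\eta(r)\,dr=1$; by hypothesis~1) of Theorem~\ref{th2}, $q(r)<\infty$ on a set $E_1\subset[r_1,r_2]$ of positive measure, so $C$ is finite and positive, and by Fubini the resulting integral is finite and depends only on $k_0$, $Q$, $D^{\,\prime}$. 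Summing the four bounds yields the desired constant $N$.

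The main obstacle will be the coordination of the geometric separation for the ``fixed'' endpoint pieces. Since $|\widetilde{\alpha}|$ ends at $x_{M(k_0)}\in d_{k_0}$, it enters $B(\overline{x_0},r_{k_0})$, so separating it from $|\widetilde{\alpha_m}|\subset B(\overline{x_0},r_{k_0+1})$ is not automatic; one must either use the inner radius $r_{k_0+2}$ in place of $r_{k_0+1}$, or partition $|\widetilde{\alpha}|$ itself into an initial portion outside $\overline{B(\overline{x_0},r_{k_0+1})}$ and a tail portion in $d_{k_0}$ absorbed into the $|\widetilde{\alpha_m}|$ case. A subsidiary subtlety is the dichotomy $j_1=j_2$ vs.\ $j_1\ne j_2$: in the latter $\gamma_{2,m}\subset D_{j_2}$ lies in a component automatically disjoint from $d_{k_0}\subset D_{j_1}$, while in the former the separation of $|\widetilde{\beta_m}|$ from $B(\overline{x_0},r_{k_0})$ must be deduced from the distinctness of $P_1,P_2$ as prime ends of the same regular domain.
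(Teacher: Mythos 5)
Your overall strategy --- split $\Gamma_m$ according to the decomposition (\ref{eq1A}) of $\gamma_{1,m}$ and $\gamma_{2,m}$, reduce each subfamily to families of the form $\Gamma_f(y_0,r_1,r_2)$, and feed a normalized $\eta$ built from the spherical means of $Q$ into (\ref{eq2*A}) --- is the same as the paper's, and your treatment of the subfamilies whose images meet $|\widetilde{\alpha}_m|$ (minorize by $\Gamma_f(\overline{x_0}, r_{k_0+1}, r_{k_0})$) coincides with the paper's handling of its family $\Gamma_m^2$. But there is a genuine gap in your treatment of the subfamilies whose image paths start on the fixed piece $|\widetilde{\alpha}|$: you propose to find a \emph{single} point $y_0$ and radii $r_1<r_2$ with $|\widetilde{\alpha}|\subset B(y_0,r_1)$ and the other endpoint set disjoint from $\overline{B(y_0,r_2)}$. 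Such a separating annulus need not exist: $|\widetilde{\alpha}|$ and $|\widetilde{\beta}|$ are merely disjoint compact curves, and two long, nearly parallel arcs at small positive mutual distance cannot be separated by any spherical annulus (any ball containing one of them entirely has radius comparable to its length and therefore meets the other). The obstacle you flag as the ``main'' one (the tail of $|\widetilde{\alpha}|$ entering $d_{k_0}$) is tangential; the real missing device is the covering-and-minorization step used in the paper. Set $\varepsilon_0:=\min\{{\rm dist}\,(|\widetilde{\alpha}|,\overline{g_{k_0}}),\ {\rm dist}\,(|\widetilde{\alpha}|,|\widetilde{\beta}|)\}>0$ and cover the compactum $|\widetilde{\alpha}|$ by finitely many balls $B(z_i,\varepsilon_0/4),$ $z_i\in|\widetilde{\alpha}|,$ $1\leqslant i\leqslant N_0$; then every path from $|\widetilde{\alpha}|$ to $|\gamma_{2,m}|$ (whose locus lies in $|\widetilde{\beta}|\cup\overline{g_{k_0+1}}$ and hence at distance at least $3\varepsilon_0/4$ from each $z_i$) contains a subpath belonging to some $\Gamma(S(z_i,\varepsilon_0/4),S(z_i,\varepsilon_0/2),A(z_i,\varepsilon_0/4,\varepsilon_0/2))$, so the whole subfamily is minorized by $\bigcup\limits_{i=1}^{N_0}\Gamma_f(z_i,\varepsilon_0/4,\varepsilon_0/2)$ and one sums $N_0$ applications of (\ref{eq2*A}). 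Note that this one construction also disposes of your fourth case $(|\widetilde{\alpha}|,|\widetilde{\beta}_m|)$, since $\varepsilon_0$ controls the distance to $\overline{g_{k_0}}$ as well; the paper therefore needs only a two-way split according to the starting piece.

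A secondary point: your admissible function $\eta=\chi_{E_1}\,q^{-1/(n-1)}/C$ can fail to be normalizable if $q$ vanishes on a subset of $E_1$ of positive measure (then $C=\infty$). The paper avoids this by replacing $Q$ with $\widetilde{Q}=\max\{Q,1\},$ so that the means $\widetilde{q}_{z_i}(r)$ are bounded away from zero, the normalizing integrals $I_i$ are finite and strictly positive, and one obtains the explicit bound $N=\sum\limits_{i=1}^{N_0}\omega_{n-1}/I_i^{n-1}+\omega_{n-1}/I_0^{n-1}$ depending only on $k_0,$ $Q$ and $D^{\,\prime}.$
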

\begin{proof}
Let $k_0$ ne an arbitrary number for which the statement of
Lemma~\ref{lem1} holds. By the definition of $\gamma_{1, m}$ and a
family $\Gamma_m$ we may write
\begin{equation}\label{eq7A}
\Gamma_m=\Gamma_m^1\cup \Gamma_m^2\,,
\end{equation}
where $\Gamma_m^1$ is a family of paths $\gamma\in\Gamma_m$ such
that $f(\gamma)\in \Gamma(|\widetilde{\alpha}|, |\gamma_{2, m}|,
D^{\,\prime})$ and $\Gamma_m^2$ is a family of paths
$\gamma\in\Gamma_m$ such that $f(\gamma)\in
\Gamma(|\widetilde{\alpha}_m|, |\gamma_{2, m}|, D^{\,\prime}).$

\medskip
Under the notions of Lemma~\ref{lem1}, let
$\varepsilon_0=\min\{{\rm dist}\,(|\widetilde{\alpha}|,
\overline{g_{k_0}}), {\rm dist}\,(|\widetilde{\alpha}|,
|\widetilde{\beta}|)\}>0.$ Due to the condition~1) of the theorem,
for any $y_0\in\overline{D^{\,\prime}}$ we may find
$\varepsilon=\varepsilon(y_0)>0,$ such that
$\varepsilon/4<\varepsilon/2<\widetilde{\varepsilon}/2$ and
\begin{equation}\label{eq1AA}
\int\limits_{\varepsilon/4}^{\varepsilon/2}\
\frac{dr}{r\widetilde{q}_{y_0}^{\frac{1}{n-1}}(r)}>0\,,\end{equation}
where $\widetilde{q}_{y_0}(r)$ is defined in~(\ref{eq2B}).
We consider now the cover of the set $|\widetilde{\alpha}|$ of the
following type: $\bigcup\limits_{y_0\in |\widetilde{\alpha}|}B(y_0,
\varepsilon(y_0)/4).$ Since $|\widetilde{\alpha}|$ is a compactum in
$D^{\,\prime},$ there is a number $N_0\in {\Bbb N}$ such that
$|\widetilde{\alpha}|\subset \bigcup\limits_{i=1}^{N_0} B(z_i,
\varepsilon^{\,i}/4),$ where $\varepsilon^{\,i}=\varepsilon(z_i),$
$z_i\in |\widetilde{\alpha}|$ for $1\leqslant i\leqslant N_0.$ Due
to~\cite[Theorem~1.I.5.46]{Ku$_2$}
\begin{equation}\label{eq5A}
\Gamma(|\widetilde{\alpha}|, |\gamma_{2, m}|,
D^{\,\prime})>\bigcup\limits_{i=1}^{N_0} \Gamma(S(z_i,
\varepsilon^{\,i}/4), S(z_i, \varepsilon^{\,i}/2), A(z_i,
\varepsilon^{\,i}/4, \varepsilon^{\,i}/2))\,.
\end{equation}
Fix $\gamma\in \Gamma_m^1,$ $\gamma:[0, 1]\rightarrow D,$
$\gamma(0)\in |\widetilde{\alpha}|,$ $\gamma(1)\in |\gamma_{2, m}|.$
It follows from the relation~(\ref{eq5A}) that $f(\gamma)$ has a
subpath $f(\gamma)_1:=f(\gamma)|_{[p_1, p_2]}$ such that
$$f(\gamma)_1\in \Gamma(S(z_i, \varepsilon^{\,i}/4), S(z_i,
\varepsilon^{\,i}/2), A(z_i, \varepsilon^{\,i}/4,
\varepsilon^{\,i}/2))$$ for some $1\leqslant i\leqslant N_0.$ Then
$\gamma|_{[p_1, p_2]}$ is a subpath which is, on one hand, is a
subpath of $\gamma,$ and on the other hand belongs to~$\Gamma_f(z_i,
\varepsilon^{\,i}/4, \varepsilon^{\,i}/2),$ because
$$f(\gamma|_{[p_1, p_2]})=f(\gamma)|_{[p_1, p_2]}\in\Gamma(S(z_i,
\varepsilon^{\,i}/4), S(z_i, \varepsilon^{\,i}/2), A(z_i,
\varepsilon^{\,i}/4, \varepsilon^{\,i}/2)).$$ Thus
\begin{equation}\label{eq6}
\Gamma_m^1>\bigcup\limits_{i=1}^{N_0} \Gamma_f(z_i,
\varepsilon^{\,i}/4, \varepsilon^{\,i}/2)\,.
\end{equation}
Let
\begin{equation*}\label{eq2}
I_i=I_i(z_i, \varepsilon^{\,i}/4,
\varepsilon^{\,i}/2)=\int\limits_{\varepsilon^{\,i}/4}
^{\varepsilon^{\,i}/2}\
\frac{dr}{r\widetilde{q}_{z_i}^{\frac{1}{n-1}}(r)}\,.
\end{equation*}
Note that, $I_i\ne 0$ because~(\ref{eq1AA}) holds. In addition,
$I_i\ne\infty,$ because $I_i\leqslant\log\frac{r_2}{r_1}<\infty,$
$i=1,2, \ldots, {N_0}.$
We set
$$\eta_i(r)=\begin{cases}
\frac{1}{I_ir\widetilde{q}_{z_i}^{\frac{1}{n-1}}(r)}\,,&
r\in [\varepsilon^{\,i}/4, \varepsilon^{\,i}/2]\,,\\
0,& r\not\in [\varepsilon^{\,i}/4, \varepsilon^{\,i}/2]\,.
\end{cases}$$
Note that the function~$\eta_i$ satisfies the condition
~$\int\limits_{\varepsilon^{\,i}/4}^{\varepsilon^{\,i}/2}\eta_i(r)\,dr=1,$
therefore, it may be substituted in the right-hand side
of~(\ref{eq2*A}). Now, due to~(\ref{eq6}) and by the subadditivity
of the modulus of families of paths (see \cite[Theorem~6.1]{Va}) we
obtain that
\begin{gather}
M(\Gamma_m^1)\leqslant \sum\limits_{i=1}^{N_0}M(\Gamma_f(z_i,
\varepsilon^{\,i}/4, \varepsilon^{\,i}/2))\nonumber\\
\label{eq2A} \leqslant\sum\limits_{i=1}^{N_0}\int\limits_{A(z_i,
\varepsilon^{\,i}/4, \varepsilon^{\,i}/2)}
\widetilde{Q}(y)\,\eta^n_i(|y-z_i|)\,dm(y)\,.
\end{gather}
However, by the Fubini theorem (see, e.g.,
\cite[Theorem~8.1.III]{Sa})
\begin{equation}\label{eq7CB}
\int\limits_{A(z_i, \varepsilon^{\,i}/4, \varepsilon^{\,i}/2)}
\widetilde{Q}(y)\,\eta^n_i(|y-z_i|)\,dm(y)
=\int\limits_{\varepsilon^{\,i}/4}^{\varepsilon^{\,i}/2}\int\limits_{S(z_i,
r)}\widetilde{Q}(y)\eta^n_i(|y-z_i|)\,d\mathcal{H}^{n-1}\,dr\,=
\end{equation}
$$=\frac{\omega_{n-1}}{I_i^n}\int\limits_{\varepsilon^{\,i}/4}
^{\varepsilon^{\,i}/2}r^{n-1} \widetilde{q}_{z_i}(r)\cdot
\frac{dr}{r^n\widetilde{q}^{\frac{n}{n-1}}_{z_i}(r)}=
\frac{\omega_{n-1}}{I_i^{n-1}}\,.$$
Combining~(\ref{eq2A}) with (\ref{eq7CB}), we obtain that
\begin{equation}\label{eq3A}
M(\Gamma_m^1)\leqslant
\sum\limits_{i=1}^{N_0}\frac{\omega_{n-1}}{I_i^{n-1}}\,,\qquad
m\geqslant M_0\,.
\end{equation}
Follow, by~\cite[Theorem~1.I.5.46]{Ku$_2$},
$\Gamma_m^2>\Gamma_f(\overline{x_0}, r_{k_0+1}, r_{k_0}).$
Arguing as above, we set
$$\eta_0(r)=\begin{cases}
\frac{1}{I_0r\widetilde{q}_{\overline{x_0}}^{\frac{1}{n-1}}(r)}\,,&
r\in [\varepsilon^{\,0}/4, \varepsilon^{\,0}/2]\,,\\
0,& r\not\in [\varepsilon^{\,0}/4, \varepsilon^{\,0}/2]\,,
\end{cases}$$
where $I_0=I_0(\overline{x_0}, \varepsilon^{\,0}/4,
\varepsilon^{\,0}/2)=\int\limits_{\varepsilon^{\,0}/4}^{\varepsilon^{\,0}/2}\
\frac{dr}{r\widetilde{q}_{\overline{x_0}}^{\frac{1}{n-1}}(r)}.$
Then from the latter relation we obtain that
\begin{equation}\label{eq4D}
M(\Gamma_m^2)\leqslant \frac{\omega_{n-1}}{I_0^{n-1}}\,,\quad
m\geqslant M_0\,.
\end{equation}
Thus, by the relations~(\ref{eq7A}), (\ref{eq3A}) and~(\ref{eq4D}),
due to the subadditivity of the modulus of families of paths, we
obtain that
\begin{equation}\label{eq3}
M(\Gamma_m)\leqslant
\sum\limits_{i=1}^{N_0}\frac{\omega_{n-1}}{I_i^{n-1}}+\frac{\omega_{n-1}}{I_0^{n-1}}<\infty\,,
\quad m\geqslant M_0\,.
\end{equation}
The right hand part of the latter relation does not depend on~$m,$
so we may set
$N:=\sum\limits_{i=1}^{N_0}\frac{\omega_{n-1}}{I_i^{n-1}}+\frac{\omega_{n-1}}{I_0^{n-1}}.$
Lemma~\ref{lem4} is totally proved.~$\Box$
\end{proof}

\medskip
Let $D\subset {\Bbb R}^n,$ $f:D\rightarrow {\Bbb R}^n$ be a discrete
open mapping, $\beta: [a,\,b)\rightarrow {\Bbb R}^n$ be a path, and
$x\in\,f^{\,-1}(\beta(a)).$ A path $\alpha: [a,\,c)\rightarrow D$ is
called a {\it maximal $f$-lifting} of $\beta$ starting at $x,$ if
$(1)\quad \alpha(a)=x\,;$ $(2)\quad f\circ\alpha=\beta|_{[a,\,c)};$
$(3)$\quad for $c<c^{\prime}\leqslant b,$ there is no a path
$\alpha^{\prime}: [a,\,c^{\prime})\rightarrow D$ such that
$\alpha=\alpha^{\prime}|_{[a,\,c)}$ and $f\circ
\alpha^{\,\prime}=\beta|_{[a,\,c^{\prime})}.$ Here and in the
following we say that a path $\beta:[a, b)\rightarrow
\overline{{\Bbb R}^n}$ converges to the set $C\subset
\overline{{\Bbb R}^n}$ as $t\rightarrow b,$ if $h(\beta(t),
C)=\sup\limits_{x\in C}h(\beta(t), C)\rightarrow 0$ as $t\rightarrow
b.$ The following is true (see~\cite[Lemma~3.12]{MRV}).

\medskip
\begin{proposition}\label{pr3}
{\it\, Let $f:D\rightarrow {\Bbb R}^n,$ $n\geqslant 2,$ be an open
discrete mapping, let $x_0\in D,$ and let $\beta: [a,\,b)\rightarrow
{\Bbb R}^n$ be a path such that $\beta(a)=f(x_0)$ and such that
either $\lim\limits_{t\rightarrow b}\beta(t)$ exists, or
$\beta(t)\rightarrow \partial f(D)$ as $t\rightarrow b.$ Then
$\beta$ has a maximal $f$-lifting $\alpha: [a,\,c)\rightarrow D$
starting at $x_0.$ If $\alpha(t)\rightarrow x_1\in D$ as
$t\rightarrow c,$ then $c=b$ and $f(x_1)=\lim\limits_{t\rightarrow
b}\beta(t).$ Otherwise $\alpha(t)\rightarrow \partial D$ as
$t\rightarrow c.$}
\end{proposition}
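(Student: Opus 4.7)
The plan is to build the maximal lifting incrementally using the theory of normal neighborhoods for open discrete mappings, and then to apply a Zorn-type argument to obtain the largest possible existence interval. First I would recall that, for any open discrete map $f:D\rightarrow {\Bbb R}^n$ and any $z\in D$, there exist arbitrarily small \emph{normal neighborhoods} $U$ of $z$: $U$ is a component of $f^{\,-1}(V)$ for some open $V\ni f(z)$, the restriction $f|_U:U\rightarrow V$ is proper and surjective, and $f^{\,-1}(f(z))\cap U=\{z\}$. This is the standard tool (cf.~\cite[I.4]{MRV}) that permits lifting short subpaths of $\beta$ uniquely into $U$.

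Next I would consider the collection $\mathcal{L}$ of all pairs $(c,\alpha)$ with $a<c\leqslant b$ and $\alpha:[a,c)\rightarrow D$ continuous, satisfying $\alpha(a)=x_0$ and $f\circ\alpha=\beta|_{[a,c)}$. Using a normal neighborhood of $x_0$, a short initial lift is produced, so $\mathcal{L}\ne\varnothing$. Ordering $\mathcal{L}$ by extension, and using that liftings in normal neighborhoods are unique (so overlapping lifts glue together consistently), I would invoke Zorn's lemma to obtain a maximal element $\alpha:[a,c^{\,*})\rightarrow D$.

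Then I would analyze the behavior of $\alpha(t)$ as $t\rightarrow c^{\,*}$. Suppose the path does not tend to $\partial D$; then there exist a sequence $t_k\rightarrow c^{\,*}$ and a point $x_1\in D$ with $\alpha(t_k)\rightarrow x_1$. Since $f(\alpha(t_k))=\beta(t_k)\rightarrow f(x_1)\in f(D)$, the hypothesis excludes $\beta(t)\rightarrow \partial f(D)$, so $\lim_{t\rightarrow c^{\,*}}\beta(t)=f(x_1)$ exists (either because $c^{\,*}=b$, which yields the first branch of the dichotomy, or by continuity of $\beta$ at the interior point $c^{\,*}<b$). In the latter case, take a normal neighborhood $U$ of $x_1$ with $f(U)=B(f(x_1),\delta)$ and choose $k$ so large that $\alpha(t_k)\in U$ and $\beta([t_k,c^{\,*}+\varepsilon))\subset B(f(x_1),\delta)$ for some $\varepsilon>0$. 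A connectedness argument using the properness of $f|_U$ shows that $\alpha$ cannot exit $U$ on $[t_k,c^{\,*})$: if $s\in[t_k,c^{\,*})$ were an exit time, then $\alpha(s)\in\partial U$ would force $\beta(s)=f(\alpha(s))\notin B(f(x_1),\delta)$, a contradiction. Hence $\alpha([t_k,c^{\,*}))\subset U$, and lifting $\beta|_{[t_k,c^{\,*}+\varepsilon)}$ uniquely into $U$ extends $\alpha$ beyond $c^{\,*}$, contradicting maximality.

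The main obstacle I expect is precisely this last step: showing that whenever the maximal lift accumulates at an interior point $x_1\in D$ it must in fact converge to $x_1$ rather than merely oscillate, and that the lift in $U$ actually glues to $\alpha$ on $[t_k,c^{\,*})$. This is where the discrete-open structure of $f$ enters in an essential way, via the ``exit-at-the-boundary'' property of normal neighborhoods together with the uniqueness of lifts within them. The remaining pieces—the initial lift, the Zorn's lemma application and the identification $f(x_1)=\lim_{t\rightarrow b}\beta(t)$ in the terminal case $c^{\,*}=b$—are routine once normal neighborhoods are in place.
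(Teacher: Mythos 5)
The paper does not actually prove Proposition~2.1: it is quoted as a known result with the single reference \cite[Lemma~3.12]{MRV} (see also Rickman's book, Ch.~II.3), so the only meaningful comparison is with that classical argument --- and your sketch reconstructs essentially exactly it: arbitrarily small normal neighborhoods for discrete open maps, local path lifting inside them, a Zorn/supremum argument producing a maximal lift, and the ``exit through the boundary'' property of normal domains (namely $f(\partial U)=\partial f(U)$, so an exit time $s$ would force $\beta(s)\notin f(U)$) to rule out the case where the maximal lift stalls at an interior accumulation point before $t=b$.

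Two points need repair, both minor. First, liftings inside a normal neighborhood are \emph{not} unique in general: if $x_1$ is a branch point, a path through the critical value $f(x_1)$ has several lifts in $U$. Uniqueness is not needed for the Zorn step (a chain is ordered by extension, so compatibility is automatic), but it is silently used in your final gluing step, where you re-lift $\beta|_{[t_k,\,c^{*}+\varepsilon)}$ into $U$ and assert it agrees with $\alpha$ on $[t_k,c^{*})$; that assertion can fail. The standard fix is to avoid re-lifting: your containment argument, applied to a shrinking basis of normal neighborhoods of $x_1$, already yields the full limit $\alpha(t)\rightarrow x_1$ as $t\rightarrow c^{*}$, so you may continuously extend by $\alpha(c^{*}):=x_1$ and then lift $\beta|_{[c^{*},\,c^{*}+\varepsilon)}$ starting \emph{at} $x_1$, contradicting maximality. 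Second, in the terminal case $c^{*}=b$ the same containment argument is still needed to upgrade the subsequential limit to a genuine limit $\alpha(t)\rightarrow x_1$, and one should note explicitly that openness of $f$ gives $f(x_1)\in f(D)$, which is disjoint from $\partial f(D)$, so the dichotomy in the hypothesis forces $\lim_{t\rightarrow b}\beta(t)$ to exist and equal $f(x_1)$. With these adjustments your outline is a faithful reproduction of the proof the paper points to.
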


\medskip
The following lemma was proved in \cite[Lemma~5.3]{DS}.

\medskip
\begin{lemma}\label{lem2}
{\it\, Let $D^{\,\prime}$ be a domain in ${\Bbb R}^n,$ $n\geqslant
2,$ and let $E\subset \overline{D^{\,\prime}},$ $\partial
D^{\,\prime}\subset E,$ and let $E$ be closed in $\overline{\Bbb
R^{n}}.$ Suppose also that, for any $z_0\in E$ and for any
neighborhood $U$ of the point $z_0$ there is a neighborhood
$V\subset U$ of this point such that $(V\setminus E)\cap
D^{\,\prime}$ consists of a finite number of components. Let $z_1,
z_2\in E,$ $z_1\ne z_2,$ and let $x_k, y_k\in D^{\,\prime}\setminus
E,$ $k=1,2,\ldots ,$ be sequences, which converge to $z_1$ and
$z_2,$ respectively. Then there are subsequences $x_{k_l}$ and
$y_{k_l},$ $l=1,2,\ldots ,$ each of which belong to exactly one
(corresponding) component $K_1$ and $K_2$ of $D^{\,\prime}\setminus
E.$

Moreover, if $a_1\in K_1$ and $a_2\in K_2,$ $a_1\ne a_2,$ then there
are (possibly) new subsequences $x_{k_{l_m}}$ and $y_{k_{l_m}},$
$m=1,2,\ldots ,$ and paths $\alpha:[0, 1]\rightarrow
\overline{D^{\,\prime}},$ $\beta:[0, 1]\rightarrow
\overline{D^{\,\prime}}$ such that:

\medskip
a) $\alpha(t)\in D^{\,\prime}\setminus E$ and $\beta(t)\in
D^{\,\prime}\setminus E$ for $0\leqslant t<1,$

\medskip
b) $x_{k_{l_m}}\in |\alpha|,$ $y_{k_{l_m}}\in |\beta|,$
$m=1,2,\ldots ,$

\medskip
c) $\alpha(0)=a_1,$ $\beta(0)=a_2,$ $\alpha(1)=z_1,$ $\beta(1)=z_2,$

\medskip
d) $|\alpha|\cap |\beta|=\varnothing,$ see Figure~\ref{fig1}.
\begin{figure}[h]
\centerline{\includegraphics[scale=0.4]{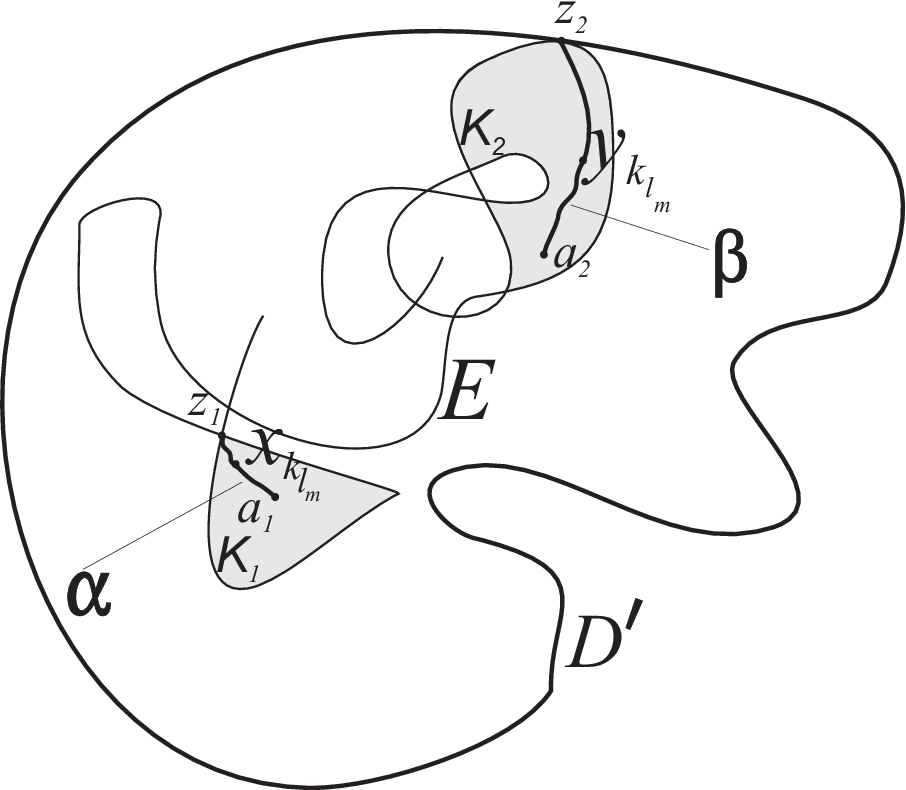}} \caption{The
formulation of Lemma~\ref{lem2}}\label{fig1}
\end{figure}}
\end{lemma}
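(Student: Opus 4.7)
The plan is to handle the two halves separately: use the local finite-component hypothesis on $E$ together with pigeonhole for the first assertion, then build the two paths by a diagonal construction and reconcile their disjointness with Lemma~\ref{lem3}. First I would extract the subsequences in single components. Applying the hypothesis on $E$ at $z_1$, one obtains a neighborhood $V$ of $z_1$ such that $(V\setminus E)\cap D^{\,\prime}$ has only finitely many components $C_1,\dots,C_N$. Since $x_k\in D^{\,\prime}\setminus E$ and $x_k\to z_1$, all but finitely many $x_k$ lie in $\bigcup_i C_i$, and pigeonhole yields a subsequence $\{x_{k_l}\}$ contained in a single $C_{i_0}$. Because $C_{i_0}$ is connected and lies inside $D^{\,\prime}\setminus E$, it sits in one component $K_1$ of $D^{\,\prime}\setminus E$. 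The identical reasoning applied to $\{y_k\}$ at $z_2$ produces $K_2$.

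Next I would construct $\alpha$ piecewise. Fix a decreasing sequence of neighborhoods $V_m$ of $z_1$ with $V_m\subset V$, $\operatorname{diam}(V_m)\to 0$, and (by hypothesis on $E$) with $(V_m\setminus E)\cap D^{\,\prime}$ having only finitely many components. A diagonal/pigeonhole argument — for each $m$ pigeonhole the tail of the current sequence among the finitely many components of $(V_m\setminus E)\cap D^{\,\prime}$, then pass to the limit of such refinements — yields a subsequence $x_{k_{l_m}}$ such that for every $m$ the points $x_{k_{l_m}}$ and $x_{k_{l_{m+1}}}$ belong to a common component of $(V_m\setminus E)\cap D^{\,\prime}$; path-connectedness of this component gives an arc $\alpha_m$ inside it joining the two points. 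Prepend an arc $\alpha_0\subset K_1$ from $a_1$ to $x_{k_{l_1}}$, available since $K_1$ is a domain. Concatenate $\alpha_0,\alpha_1,\alpha_2,\dots$ on intervals whose right endpoints accumulate at $1$ and set $\alpha(1):=z_1$; continuity at $1$ follows from $|\alpha_m|\subset V_m$ and $\operatorname{diam}(V_m)\to 0$. A standard loop-removal procedure (keeping the marked points $x_{k_{l_m}}$ and both endpoints fixed) turns $\alpha$ into a Jordan arc. The analogous construction with neighborhoods of $z_2$, the starting point $a_2\in K_2$, and the $y$-sequence produces $\beta$.

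Finally I would verify disjointness (d) by splitting on whether $K_1=K_2$. If $K_1\ne K_2$, then $\alpha((0,1))\subset K_1$, $\beta((0,1))\subset K_2$, $K_1\cap K_2=\varnothing$, and $\alpha(1)=z_1\ne z_2=\beta(1)$, so $|\alpha|\cap|\beta|=\varnothing$ automatically. If $K_1=K_2$, I would apply Lemma~\ref{lem3} inside $D^{\,\prime}$ to the Jordan path $\alpha$ and the path $\beta$; the second clause of that lemma preserves $\alpha$ and produces $\beta^{\,*}$ with $|\alpha|\cap|\beta^{\,*}|=\varnothing$ that coincides with $\beta$ on some $[0,t_0]$ and $[T_0,1]$. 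Since $y_{k_{l_m}}\to z_2=\beta(1)$, the parameter values at which the $y_{k_{l_m}}$ sit on $|\beta|$ accumulate at $1$, so all but finitely many of them lie on the preserved terminal segment $\beta|_{[T_0,1]}\subset|\beta^{\,*}|$; discarding the finite residue yields the required further subsequence $y_{k_{l_m}}$.

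The main obstacle is the diagonal-pigeonhole extraction producing consecutive subsequence points in a common component of $(V_m\setminus E)\cap D^{\,\prime}$, together with the verification that the concatenated path is continuous at $t=1$ — this depends crucially on arranging each arc $\alpha_m$ inside $V_m$ whose diameter shrinks. Once that is done, the Jordanification step and the disjointness argument from Lemma~\ref{lem3} are essentially routine bookkeeping.
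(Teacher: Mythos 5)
The paper does not actually prove Lemma~\ref{lem2}; it imports it from \cite[Lemma~5.3]{DS}, so there is no in-text proof to compare against. Your reconstruction follows the standard route used throughout this circle of papers (cf.\ the proofs of Lemma~2.1 in \cite{ISS} and Lemma~5.2 in \cite{DS}): pigeonhole over the finitely many components of $(V\setminus E)\cap D^{\,\prime}$ for the first claim, a nested diagonal extraction with shrinking neighborhoods $V_m$ to build the paths, and Lemma~\ref{lem3} to enforce disjointness, and it is essentially correct. Two points deserve tightening. First, in the case $K_1=K_2$ you should apply Lemma~\ref{lem3} with the domain taken to be the component $K_1$ itself (an open connected subset of $D^{\,\prime}\setminus E$ whose closure contains $z_1$ and $z_2$), not with $D^{\,\prime}$: Lemma~\ref{lem3} only guarantees that the modified path $\beta^{\,*}$ has its interior in the ambient domain, so applied in $D^{\,\prime}$ it could route $\beta^{\,*}$ through $E\cap D^{\,\prime}$ and destroy condition~a). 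Second, your parenthetical claim that loop removal can be performed ``keeping the marked points $x_{k_{l_m}}$ fixed'' is not automatic, since a marked point may sit only on an excised loop; one must either build the concatenated arc to be injective from the start or, after loop erasure, retain only those marked points that survive and invoke the freedom to pass to a further subsequence --- the same (somewhat informal) device the authors use in part~III.b of the proof of Theorem~\ref{th3}. With these repairs the argument goes through.
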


\medskip
Given $\delta>0,$ domains $D, D^{\,\prime}\subset {\Bbb R}^n,$
$n\geqslant 2,$ $E\subset\overline{D^{\,\prime}},$ an element
$a\subset D^{\,\prime}\setminus E,$ a Lebesgue measurable function
$Q:{\Bbb R}^n\rightarrow [0, \infty],$ $Q(y)\equiv 0$ outside $D_0,$
we denote by $\frak{R}^{a}_{Q, \delta}(D, D^{\,\prime})$ is a family
of all open and discrete mappings $f:D\rightarrow D^{\,\prime}$ of a
domain $D$ onto $D^{\,\prime}$ such that (\ref{eq2*A})--(\ref{eqA2})
hold for every $y_0\in \overline{D^{\,\prime}},$ any
$0<r_1<r_2<\infty$ and, in addition, $h(f^{\,-1}(a),
\partial D)\geqslant \delta.$ The following statement is true,
cf. \cite[Lemma~4.1]{SevSkv$_3$},
\cite[Lemma~2.13]{ISS}.

\medskip
\begin{lemma}\label{lem1A}
{\it\, Let $\delta>0,$ let $D, D^{\,\prime}$ be domains in ${\Bbb
R}^n,$ $n\geqslant 2,$ let $a\in D^{\,\prime}$ and let $Q:{\Bbb
R}^n\rightarrow [0, \infty],$ $Q(y)\equiv 0$ outside $D^{\,\prime},$
be a Lebesgue measurable function. Assume that, no connected
component of the boundary of the domain $D$ degenerates into a point
and, besides that, at least one of the two conditions 1a) or 1b)
specified in Theorem~\ref{th2} is satisfied.

\medskip
If $E$ is closed in $D^{\,\prime}$ and $E_*$ is some (may be
degenerate) continuum in the component $K$ of $D^{\,\prime}\setminus
E$ which contains $a,$ then there exists $\delta_*>0$ such that
$h(f^{\,-1}(E_*),
\partial D)\geqslant \delta_*>0$ for every $f\in\frak{R}^{a}_{Q,
\delta}(D, D^{\,\prime}).$ }
\end{lemma}

\medskip
\begin{proof} Observe that, $f^{\,-1}(E_*)$ is a compact set in $D$
whenever $f\in\frak{R}^{a}_{Q, \delta}(D, D^{\,\prime}).$
Otherwise, there is $x_k\in f^{\,-1}(E_*)$ with $d(x_k,
\partial D)\rightarrow 0$ as $k\rightarrow\infty.$ Since $\overline{{\Bbb R}^n}$
is a compact metric space, we may consider that $f(x_k)$ converges
to some $b\in \overline{{\Bbb R}^n}$ as $k\rightarrow\infty.$ By the
definition, $b\in C(f, \partial D)\cap E_*\subset E\cap E_*,$ that
contradicts the definition of $E_*.$

\medskip
Therefore, the relation $h(f^{\,-1}(E),
\partial D)\geqslant \delta_*$ is obvious for some $\delta_*=\delta_*(f).$
The problem is only about the presence of a common $\delta_*>0$ that
provides the entire family of mappings $\frak{R}^{a}_{Q, \delta}(D,
D^{\,\prime}).$

\medskip
Let us prove Lemma~\ref{lem1A} by the contradiction. Assume that,
the conclusion of the lemma is not true. Then for each $m\in {\Bbb
N}$ there is some number $m\in {\Bbb N}$ such that
$h(f^{\,-1}_{m}(E_*),
\partial D)<1/m$ for some $f_m\in \frak{R}^{a}_{Q, \delta}(D,
D^{\,\prime}).$ Since $f^{\,-1}_{m}(E_*)$ is a compactum in $D$ for
each $m=1,2,\ldots,$ we have that $h(f^{\,-1}_{m}(E_*),
\partial D)=h(x_m, y_m)<\frac{1}{m}$ for some
$x_m\in f^{\,-1}_{m}(E_*)$ and $y_m\in \partial D.$ Since $\partial
D$ is a compact set, we may assume that $y_m\rightarrow y_0$ as
$m\rightarrow\infty$ for some $y_0\in
\partial D.$ Now, $x_m\rightarrow y_0$ as $m\rightarrow\infty,$ as
well.

\medskip
Let $w_m=f_m(x_m)\in E_*.$ We may consider that $w_m$ converges to
some point $w_0$ in $E_*$ as $m\rightarrow\infty.$ Observe that,
$w_m\in B(w_0, r_0)\subset D^{\,\prime}\setminus E$ for some $r_0>0$
and sufficiently large $m=1,2\ldots.$ We join the points $w_0$ and
$a$ by some path $\gamma:[1, 2]\rightarrow D^{\,\prime}\setminus E,$
$\gamma(1)=w_0,$ $\gamma(2)=a,$ and let $\gamma_m:[0, 1]\rightarrow
D^{\,\prime}$ be a segment $\gamma_m(t)=w_m+(w_0-w_m)t,$ $t\in [0,
1].$ Now, $K_0:=|\gamma|\cup\overline{B(w_0, r_0)}$ is a compactum
in $D^{\,\prime}.$ We set
$$E_m(t)=\begin{cases}\gamma_m(t)\,,& t\in [0, 1]\,\\
\gamma(t)\,,& t\in [1, 2]\end{cases}\,.$$
Now, $|E_m|$ is a compactum in $D^{\,\prime}\setminus E$ for all
$m=1,2,\ldots.$ Let $A_m:[0, c_m)\rightarrow D$ be a maximal
$f_m$-lifting of $E_m$ starting at $x_m$ (it exists by
Proposition~\ref{pr3}).

\medskip
Observe that, no path $A_m(t),$ $A_m:[0, c_m)\rightarrow D,$ cannot
tend to the boundary of the domain $D$ as $t\rightarrow c_m-0,$
because $C(f_m, \partial D)\subset E$ and simultaneously $|E_m|$ is
a compactum in $D_0.$ Now, by Proposition~\ref{pr3} $c_m=2$ and
$A_m$ has a limit as $t\rightarrow 2-0.$ Observe that,
$A_m(2):=z_m\in f_m^{\,-1}(E)$ by the definition of $f_m^{\,-1}(E).$

\medskip
Let $E_0$ be a component of $\partial D$ containing $y_0.$ Since by
the assumptions of the lemma, all components of $\partial D$ are
non-degenerate, there exists $r>0$ such that $h(E_0)\geqslant r.$
Put $P>0$ and $U=B_h(y_0, R_0)=\{y\in \overline{{\Bbb R}^n}: h(y,
y_0)<R_0\},$ where $2R_0:=\min\{r/2, \delta/2\}$ and $\delta$ is a
number from the definition of the class $\frak{F}^{a}_{Q, \delta}(D,
D_0)$. Observe that $A_m\cap U\ne\varnothing\ne A_m\setminus U$ for
sufficiently large $m\in{\Bbb N},$ since $x_m\rightarrow y_0$ as
$m\rightarrow \infty,$ $x_m\in A_m;$ besides that $h(A_m)\geqslant
\delta\geqslant 2R_0$ and $h(U)\leqslant 2R_0.$ Since $A_m$ is a
continuum, $A_m\cap
\partial U\ne\varnothing$ by Proposition~\ref{pr2}. Similarly,
$E_0\cap U\ne\varnothing\ne E_0\setminus U$ for sufficiently large
$m\in{\Bbb N},$ since $h(E_0)\geqslant r> 2R_0$ and $h(U)\leqslant
2R_0.$ Since $E_0$ is a continuum, $E_0\cap
\partial U\ne\varnothing$ by Proposition~\ref{pr2}.
By the proving above,
\begin{equation*}\label{eq8}
A_m\cap
\partial U\ne\varnothing\ne E_0\cap
\partial U\,.
\end{equation*}
By Lemma~\ref{lem3} there is $V\subset U,$ $V$ is a neighborhood of
$y_0,$ such that
\begin{equation}\label{eq9}
M(\Gamma(E, F, \overline{{\Bbb R}^n}))>P
\end{equation}
for any continua $E, F\subset \overline{{\Bbb R}^n}$ with $E\cap
\partial U\ne\varnothing\ne E\cap \partial V$ and $F\cap \partial
U\ne\varnothing\ne F\cap \partial V.$
Arguing similarly to above, we may prove that
$$
A_m\cap
\partial V\ne\varnothing\ne E_0\cap
\partial V
$$
for sufficiently large $m\in {\Bbb N}.$ Thus, by~(\ref{eq9})
\begin{equation}\label{eq9B}
M(\Gamma(A_m, E_0,\overline{{\Bbb R}^n}))>P
\end{equation}
for sufficiently large $m=1,2,\ldots .$ Let $\gamma:[0,
1]\rightarrow \overline{{\Bbb R}^n}$ be a path in $\Gamma(A_m, E_0,
\overline{{\Bbb R}^n}),$ i.e., $\gamma(0)\in A_m,$ $\gamma(1)\in
E_0$ and $\gamma(t)\in \overline{{\Bbb R}^n}$ for $t\in (0, 1).$ Let
$t_m=\sup\limits_{\gamma(t)\in D}t$ and let
$\alpha_m(t)=(\gamma|_{[0, t_m)})(t).$ Let $\Gamma_m$ consists of
all such paths $\alpha_m,$ now $\Gamma(A_m, E_0, \overline{{\Bbb
R}^n})>\Gamma_m$ and by the minorization principle of the modulus
(see~\cite[Theorem~1]{Fu})
\begin{equation}\label{eq12}
M(\Gamma_m)\geqslant M(\Gamma(A_m, E_0, \overline{{\Bbb R}^n}))\,.
\end{equation}
Combining~(\ref{eq9B}) and~(\ref{eq12}), we obtain that
\begin{equation}\label{eq9C}
M(\Gamma_m)>P
\end{equation}
for sufficiently large $m=1,2,\ldots .$

\medskip
We now prove that the relation~(\ref{eq9C}) contradicts the
definition of $f_m$ in~(\ref{eq2*A})--(\ref{eqA2}). Let
$\Delta_m:=f_m(\Gamma_m).$ By the definition, $C(f_m,
\partial D)\subset E.$ Thus, $C(\beta_m(t), t_m)\subset E,$
where $\beta_m(t)=f_m(\alpha_m(t))=(f_m(\gamma|_{[0, t_m)}))(t),$
$\gamma\in \Gamma(A_m, E_0, \overline{{\Bbb R}^n})$ and
$t_m=\sup\limits_{\gamma(t)\in D}t.$

\medskip
Obviously, ${\rm dist\,}(K_0, E)\geqslant \varepsilon_0>0$ for some
$\varepsilon_0>0$ and all sufficiently large $m=1,2,\ldots.$ Due to
the condition~1) of Theorem~\ref{th2}, for any
$y_0\in\overline{D^{\,\prime}}$ we may find
$\varepsilon=\varepsilon(y_0)>0,$ such that
$\varepsilon/4<\varepsilon/2<\varepsilon_0$ and the relation
$\int\limits_{\varepsilon/4}^{\varepsilon/2}\
\frac{dr}{r\widetilde{q}_{y_0}^{\frac{1}{n-1}}(r)}>0$ holds, where
$\widetilde{q}_{y_0}(r)$ is defined in~(\ref{eq2B}).
We consider now the cover of the set $K_0$ of the following type:
$\bigcup\limits_{y_0\in K_0}B(y_0, \varepsilon(y_0)/4).$ Since $K_0$
is a compactum in $D^{\,\prime},$ there is a number $M_0\in {\Bbb
N}$ such that $K_0\subset \bigcup\limits_{i=1}^{M_0} B(z_i,
\varepsilon^{\,i}/4),$ where $\varepsilon^{\,i}=\varepsilon(z_i),$
$z_i\in K_0$ for $1\leqslant i\leqslant M_0.$ Observe that
\begin{equation}\label{eq6D}
\Delta_m=\bigcup\limits_{i=1}^{M_0}\Delta_{mi}\,,
\end{equation}
where $\Delta_{mi}$ consists of all paths $\gamma:[0, 1)\rightarrow
D^{\,\prime}$ in $\Delta_m$ such that $\gamma(0)\in B(y_i,
\varepsilon^{\,i}/4).$ We now show that
\begin{equation}\label{eq7C}
\Delta_{mi}>\Gamma(S(y_i, \varepsilon^{\,i}/4), S(y_i,
\varepsilon^{\,i}/2), A(y_i, \varepsilon^{\,i}/4,
\varepsilon^{\,i}/2))\,.
\end{equation}
Indeed, let $\Delta\in \Gamma_{mi},$ in other words, $\gamma:[0,
1)\rightarrow D^{\,\prime},$ $\gamma\in \Delta_m$ and $\gamma(0)\in
B(y_i, \varepsilon^{\,i}/4).$ Now, by the definition of
$\varepsilon^{\,i},$ $|\gamma|\cap B(y_i,
\varepsilon^{\,i}/4)\ne\varnothing\ne |\gamma|\cap
(D^{\,\prime}\setminus B(y_i, \varepsilon^{\,i}/4)).$ Therefore, by
Proposition~\ref{pr2} there is $0<t_1<1$ such that $\gamma(t_1)\in
S(y_i, \varepsilon^{\,i}/4).$ We may assume that $\gamma(t)\not\in
B(y_i, \varepsilon^{\,i}/4)$ for $t>t_1.$ Put
$\gamma_1:=\gamma|_{[t_1, 1]}.$ Similarly, $|\gamma_1|\cap B(y_i,
\varepsilon^{\,i}/2)\ne\varnothing\ne |\gamma_1|\cap
(D^{\,\prime}\setminus B(y_i, \varepsilon^{\,i}/2)).$ By
Proposition~\ref{pr2} there is $t_1<t_2<1$ with $\gamma(t_2)\in
S(y_i, \varepsilon^{\,i}/2).$ We may assume that $\gamma(t)\in
B(y_i, \varepsilon^{\,i}/2)$ for $t<t_2.$ Put
$\gamma_2:=\gamma|_{[t_1, t_2]}.$ Then, the path $\gamma_2$ is a
subpath of $\gamma,$ which belongs to the family $\Gamma(S(y_i,
\varepsilon^{\,i}/4), S(y_i, \varepsilon^{\,i}/2), A(y_i,
\varepsilon^{\,i}/4, \varepsilon^{\,i}/2)).$ Thus, the
relation~(\ref{eq7C}) is established. By~(\ref{eq6D}) and
(\ref{eq7C}), since $\Delta_m:=f_m(\Gamma_m),$ we obtain that
\begin{equation}\label{eq5}
\Gamma_m>\bigcup\limits_{i=1}^{M_0}\Gamma_{mi}\,,
\end{equation}
where $\Gamma_{mi}:=\Gamma_{f_m}(y_i, \varepsilon^{\,i}/4,
\varepsilon^{\,i}/2).$ Set $\widetilde{Q}(y)=\max\{Q(y), 1\}$ and
$$\widetilde{q}_{y_i}(r)=\int\limits_{S(y_i,
r)}\widetilde{Q}(y)\,d\mathcal{A}\,.$$
Now, arguing similarly to the proof of~(\ref{eq2A}) and~(\ref{eq3}),
we obtain that
\begin{equation}\label{eq7D} M(\Gamma_m)\leqslant
\sum\limits_{i=1}^{M_0}M(\Gamma_{mi})\leqslant
\sum\limits_{i=1}^{M_0}\frac{\omega_{n-1}}{I_i^{n-1}}:=C_0\,, \quad
m=1,2,\ldots\,,
\end{equation}
where $I_i=I_i(y_i, \varepsilon^{\,i}/4,
\varepsilon^{\,i}/2)=\int\limits_{\varepsilon^{\,i}/4}^{\varepsilon^{\,i}/2}\
\frac{dr}{r\widetilde{q}_{y_i}^{\frac{1}{n-1}}(r)}\,.$
Since $P$ in~(\ref{eq9C}) may be done arbitrary large, the
relations~(\ref{eq9C}) and~(\ref{eq7D}) contradict each other. This
completes the proof.~$\Box$
\end{proof}

\section{Proof of Theorem~\ref{th2}}

Let us prove Theorem~\ref{th2} by the contradiction, namely,  assume
that there is $x_0\in \partial D,$ a number $\varepsilon_0>0,$ the
sequences $x_m, y_m\in D\setminus E_0,$ which converge to the point
$x_0$ as $m\rightarrow\infty,$ and the corresponding maps $f_m\in
{\frak S}^P_{E, E_0 \delta, Q}(D, D^{\,\prime})$ such that
\begin{equation}\label{eq12A}
\rho(f_m(x_m),f_m(y_m))\geqslant\varepsilon_0,\quad m=1,2,\ldots .
\end{equation}
Using the M\"{o}bius transformation $\varphi:\infty\mapsto 0$ if
necessary, due to the invariance of the modulus $M$ in~(\ref{eq2*A})
(see~\cite[Theorem~8.1]{Va}), we may assume that $x_0\ne\infty.$
Since the space $(\bigcup\limits_{i=1}^N\overline{D_i}_P, \rho)$ is
compact, we may assume that $f_m(x_m)$ and $f_m(y_m)$ converge as
$m\rightarrow\infty$ to some points $P_1$ and $P_2.$ By the
continuity of the metric $\rho,$ (\ref{eq12A}) implies that $P_1\ne
P_2.$ By the assumption~2), $D^{\,\prime}\setminus E$ consists of
finite number of domains $D_1, D_2,\ldots, D_N$ any of which is
regular. Consequently, there are domains $D_{j_1}$ and $D_{j_2,}$
$1\leqslant j_1, j_2\leqslant N,$ and subsequences
$f_{m_k}(x_{m_k}), f_{m_k}(y_{m_k}),$ $k=1,2,\ldots,$ of $f_m(x_m)$
and $f_m(y_m),$ such that $f_{m_k}(x_{m_k})\in D_{j_1}$ and
$f_{m_k}(y_{m_k})\in D_{j_2}$ for $k\in {\Bbb N}.$ Without loss of
generality, making a relabeling if necessary, we may assume that
$f_m(x_m)\in D_{j_1}$ and $f_m(y_m)\in D_{j_2},$ $i=1,2,\ldots .$
Here the case $j_1=j_2$ is possible. By the definition of the metric
$\rho$ in~(\ref{eq1}) we have that $\rho_{j_1}(f_m(x_m),
P_1)\rightarrow 0$ and $\rho_{j_2}(f_m(y_m), P_2)\rightarrow 0$  as
$m\rightarrow\infty,$ where $\rho_{j_1}$ and $\rho_{j_2}$ are
corresponding metrics in spaces $\overline{D}_{j_1}$ and
$\overline{D}_{j_2},$ respectively.

\medskip
Let $z_0:=a_{j_1}\in D_{j_1}\in P$ and $y_0:=a_{j_2}\in D_{j_2}\in
P$ (such points exist by the condition of Theorem~\ref{th2}). If
$j_1=j_2,$ then $z_0\ne y_0$ by the definition. Otherwise, if
$j_1=j_2,$ we also may assume that $z_0\ne y_0$ by
Lemma~\ref{lem1A}. Let $d_m$ and $g_m$ be sequences of decreasing
domains corresponding to the prime ends $P_1$ and $P_2,$
respectively. By~\cite[Lemma~3.1]{IS}, cf.~\cite[Lemma~1]{KR}, we
may assume that the sequence of cuts $\sigma_m,$ corresponding to
domains~$d_m,$ $m=1,2,\ldots, $ lies on the
spheres~$S(\overline{x_0}, r_m),$ where $\overline{x_0}\in
\partial D^{\,\prime}$ and $r_m\rightarrow 0$ as $m\rightarrow\infty.$
Without loss of generality, we may assume that $d_1\cap
g_1=\varnothing$ and $x_0, y_0\not\in d_1\cup g_1.$

\medskip
By Lemmas~\ref{lem1} and~\ref{lem4} there exist disjoint paths
$\gamma_{1,m}:[0, 1]\rightarrow D^{\,\prime}$ and $\gamma_{2,m}:[0,
1]\rightarrow D^{\,\prime}$ and the number $N>0$ such that
$\gamma_{1, m}(0)=f_m(x_m),$ $\gamma_{1, m}(1)=z_0,$ $\gamma_{2,
m}(0)=f_m(y_m),$ $\gamma_{2, m}(1)=y_0$ and
\begin{equation}\label{eq15}
M(\Gamma_m)\leqslant N\,,\quad m\geqslant M_0
\end{equation}
for some $M_0>0,$ where $\Gamma_m$ consists of those and only those
paths $\gamma$ in $D$ for which $f_m(\gamma)\in\Gamma(|\gamma_{1,
m}|, |\gamma_{2, m}|, D^{\,\prime})$ (see Figure~\ref{fig6}).
\begin{figure}
\centering\includegraphics[width=300pt]{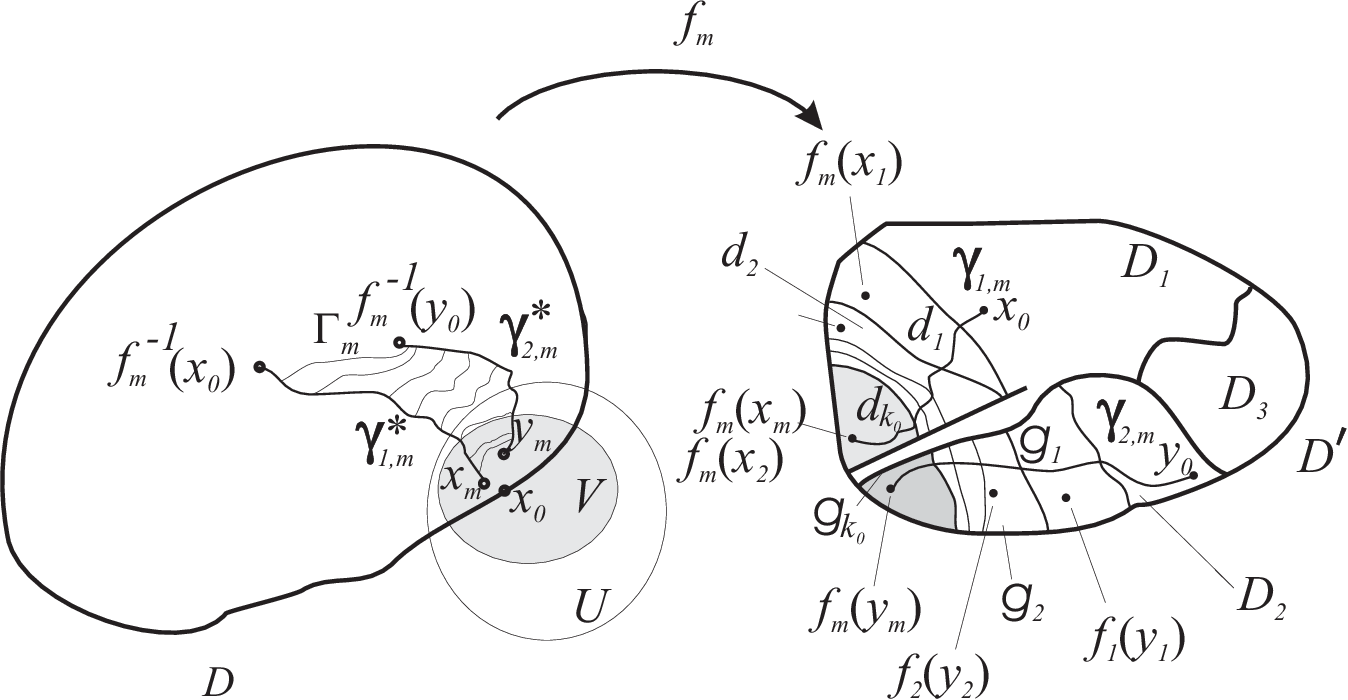} \caption{To
the proof of theorem~\ref{th2}.}\label{fig6}
\end{figure}
On the other hand, let~$\gamma^*_{1,m}:[0, c^1_m)\rightarrow D$
and~$\gamma^*_{2,m}:[0, c^2_m)\rightarrow D$ be maximal
$f_m$-liftings of the paths~$\gamma_{1,m}$ and~$\gamma_{2,m}$ with
origins at the points~$x_m$ and $y_m,$ respectively (such liftings
exist by Proposition~\ref{pr3}). By the same proposition, one of two
situations is possible:

\medskip
1) $\gamma^*_{1,m}(t)\rightarrow x_1\in D$ as $t\rightarrow c^1_m-0$
and $c^1_m=1$ and $f_m(\gamma^*_{1,m}(1))=z_0,$

\medskip
2) or $\gamma^*_{1,m}(t)\rightarrow \partial D$ as $t\rightarrow
c^1_m.$

\medskip
Let us show that situation~2) does not hold. Assume the opposite:
let $\gamma^*_{1,m}(t)\rightarrow \partial D$ as $t\rightarrow
c^1_m.$ We choose an arbitrary sequence $\theta_k\in [0, c^1_m)$
such that $\theta_k\rightarrow c^1_m-0$ as $k\rightarrow\infty.$
Since the space $\overline{{\Bbb R}^n}$ is compact, $\partial D$ is
also compact as a closed subset of a compact space. Then there
exists $w_k\in
\partial D$ is such that
\begin{equation}\label{eq7C_2}
h(\gamma^*_{1,m}(\theta_k), \partial D)=h(\gamma^*_{1,m}(\theta_k),
w_k) \rightarrow 0\,,\qquad k\rightarrow \infty\,.
\end{equation}
Due to the compactness of $\partial D$, we may assume that
$w_k\rightarrow w_0\in \partial D$ as $k\rightarrow\infty.$
Therefore, from the relation~(\ref{eq7C_2}) and by the triangle
inequality
\begin{equation}\label{eq8C}
h(\gamma^*_{1,m}(\theta_k), w_0)\leqslant
h(\gamma^*_{1,m}(\theta_k), w_k)+h(w_k, w_0)\rightarrow 0\,,\qquad
k\rightarrow \infty\,.
\end{equation}
On the other hand,
\begin{equation}\label{eq9C_2}
f_m(\gamma^*_{1,m}(\theta_k))\rightarrow\gamma_{1, m}(c^1_m)\,,\quad
k\rightarrow\infty\,,
\end{equation}
because by the construction the whole path $\gamma_{1, m}$ lies in
$D^{\,\prime}\setminus E\subset D^{\,\prime}\setminus C(f_m,\partial
D)$ together with its endpoints. At the same time, by~(\ref{eq8C})
and~(\ref{eq9C_2}) we have that $\gamma_{1, m}(c^1_m)\in C(f_m,
\partial D).$ The inclusion $|\gamma_{1, m}|\subset
D^{\,\prime}\setminus E$ contradicts the relation $\gamma_{1,
m}(c^1_m)\in E.$ The resulting contradiction indicates the
impossibility of the second situation above. Arguing similarly we
may conclude that $\gamma^*_{2,m}(t)\rightarrow x_2\in D$ as
$t\rightarrow c^2_m-0$ and $c^2_m=1$ and
$f_m(\gamma^*_{2,m}(1))=f_m(x_2)=a_{j_2}=y_0.$

\medskip
By the definition of the class ${\frak S}^P_{E, E_0 \delta, Q}(D,
D^{\,\prime}),$ we obtain that $h(f_m^{\,-1}(z_0), \partial
D)\geqslant \delta$ and $h(f_m^{\,-1}(y_0), \partial D)\geqslant
\delta$ (where $h(A, B)$ denotes the chordal distance between sets
in $\overline{{\Bbb R}^n}$). Now
$$h(|\gamma^*_{1, m}|)\geqslant h(x_m, \gamma^*_{1,m}(1)) \geqslant
(1/2)\cdot h(\gamma^*_{1,m}(1), \partial D)>\delta/2\,,$$
\begin{equation*}\label{eq14}
h(|\gamma^*_{2, m}|)\geqslant h(y_m, \gamma^*_{2,m}(1)) \geqslant
(1/2)\cdot h(\gamma^*_{2,m}(1), \partial D)>\delta/2
\end{equation*}
for sufficiently large $m\in {\Bbb N}.$
We choose the ball $U:=B_h(x_0, r_0)=\{x\in\overline{{\Bbb R}^n}:
h(x, x_0)<r_0\},$ where $r_0>0$ and $r_0<\delta/4.$ Note that
$|\gamma^*_{1, m}|\cap U\ne\varnothing\ne |\gamma^*_{1, m}|\cap
(D\setminus U)$ for sufficiently large $m\in{\Bbb N},$ because
$h(|\gamma^*_{1, m}|)\geqslant \delta/2$ and $x_m\in|\gamma^*_{1,
m}|,$ $x_m\rightarrow x_0$ as $m\rightarrow\infty.$ Reasoning
similarly, we may conclude that~$|\gamma^*_{2, m}|\cap
U\ne\varnothing\ne |\gamma^*_{2, m}|\cap (D\setminus U).$ Since
$|\gamma^*_{1, m}|$ and $|\gamma^*_{2, m}|$ are continua, due to
~\cite[Theorem~1.I.5.46]{Ku$_2$}
\begin{equation}\label{eq8AA}
|\gamma^*_{1, m}|\cap \partial U\ne\varnothing, \quad |\gamma^*_{2,
m}|\cap
\partial U\ne\varnothing\,.
\end{equation}
Fix $P:=N>0,$ where $N$ is a number from the relation~(\ref{eq15}).
Since the boundary of the domain $D$ is weakly flat, there is a
neighborhood $V\subset U$ of the point $x_0,$ such that the
inequality
\begin{equation}\label{eq9AA}
M(\Gamma(E, F, D))>N
\end{equation}
holds for any continua $E, F\subset D$ with $E\cap
\partial U\ne\varnothing\ne E\cap \partial V$ and $F\cap \partial
U\ne\varnothing\ne F\cap \partial V.$
By~\cite[Theorem~1.I.5.46]{Ku$_2$}
\begin{equation}\label{eq10AA}
|\gamma^*_{1, m}|\cap \partial V\ne\varnothing, \quad |\gamma^*_{2,
m}|\cap
\partial V\ne\varnothing\end{equation}
for sufficiently large $m\in {\Bbb N}.$ By~(\ref{eq8AA}),
(\ref{eq9AA}) and (\ref{eq10AA}), we obtain that
\begin{equation}\label{eq6a}
M(\Gamma(|\gamma^*_{1, m}|, |\gamma^*_{2, m}|, D))>N\,.
\end{equation}
The inequality~(\ref{eq6a}) contradicts~(\ref{eq15}), because
$\Gamma(|\gamma^*_{1, m}|, |\gamma^*_{2, m}|, D)\subset \Gamma_m,$
therefore,
$$M(\Gamma(|\gamma^*_{1, m}|, |\gamma^*_{2, m}|, D))
\leqslant M(\Gamma_m)\leqslant N\,.$$
The resulting contradiction indicates that the assumption
in~(\ref{eq12A}) is false. The theorem is proved.~$\Box$

\section{On Orlicz-Sobolev classes}

We now formulate a result for Orlicz-Sobolev classes $W^{1,
\varphi}_{\rm loc},$ where $\varphi$ satisfies Calderon condition
$\int\limits_{1}^{\infty}\left(\frac{t}{\varphi(t)}\right)^
{\frac{1}{n-2}}dt<\infty.$ For definitions of Orlicz-Sobolev classes
see, e.g., \cite{Sev$_2$}.

\medskip
Given number $\delta>0,$ $K\geqslant 1,$ $N_0\in {\Bbb N},$ a
non-decreasing function $\varphi:(0,\infty)\rightarrow (0,\infty),$
domains $D, D^{\,\prime}\subset{\Bbb R}^n,$ $n\geqslant 3,$ a set
$E\subset \overline{D^{\,\prime}},$ closed in $\overline{{\Bbb
R}^n}$ such that $\partial D^{\,\prime}\subset E,$ a set $E_0\subset
D,$ a finite set of points $P=\{a_i\}_{i=1}^{S}\subset
D^{\,\prime}\setminus E,$ $0\leqslant i\leqslant S,$ and a Lebesgue
measurable function $Q:{\Bbb R}^n\rightarrow [0, \infty],$ denote by
${\frak OS}^{P, K, \varphi}_{E, E_0, \delta, Q}(D, D^{\,\prime})$ a
family of all open discrete mappings $f$ of $D$ onto $D^{\,\prime}$
such that $f= g\circ\psi,$ where $\psi$ is some $K$-quasiregular
mapping for which $N(\psi, D)\leqslant N_0,$ and $g$ is a
homeomorphism in $\psi(D)$ such that $g^{\,-1}\in W^{1,
\varphi}_{\rm loc}(D^{\,\prime})$ and

\medskip
1) $C(f, \partial D)\subset E,$

\medskip
2) $f^{\,-1}(E\cap D^{\,\prime})\subset E_0,$

\medskip
3) $h(f^{\,-1}(a_i), \partial D)\geqslant \delta$ for all $i\in
{\Bbb N}$ (here $h(A, B)$ denotes the chordal distance between sets
in $\overline{{\Bbb R}^n},$ $h(A, B)=\inf\limits_{x\in A, y\in
B}h(x, y)$),

\medskip
4) $K^{n-1}_O(x, f)\leqslant Q(x)$ a.e., where $$K_{O}(x,f)\quad
=\quad \left\{
\begin{array}{rr}
\frac{\Vert f^\prime(x)\Vert^n}{|J(x,f)|}, & J(x,f)\ne 0,\\
1,  &  f^{\,\prime}(x)=0, \\
\infty, & {\rm otherwise}
\end{array}
\right.\,.$$
The following statement holds.

\medskip
\begin{theorem}\label{th1}
{\it\, Assume that, under conditions and notions of
Theorem~\ref{th2}, $Q_{\rm loc}^1(D)$ and
\begin{equation*}\label{eqOS3.0a}
\int\limits_{1}^{\infty}\left(\frac{t}{\varphi(t)}\right)^
{\frac{1}{n-2}}dt<\infty\,.
\end{equation*}
Then the family ${\frak OS}^{P, K, \varphi}_{E, E_0, \delta, Q}(D,
D^{\,\prime})$ is equicontinuous at every point $x_0\in \partial D$
with respect to $D\setminus E_0,$ i.e., for every $\varepsilon>0$
there is $\delta=\delta(x_0, \varepsilon)>0$ such that $\rho(f(x),
f(y))<\varepsilon$ for any $x, y\in B(x_0, \delta)\cap (D\setminus
E_0)$ and any $f\in{\frak OS}^{P, K, \varphi}_{E, E_0, \delta, Q}(D,
D^{\,\prime}),$ where the metric $\rho$ is defined in~(\ref{eq1}).
In particular, any $f\in {\frak OS}^{P, K, \varphi}_{E, E_0, \delta,
Q}(D, D^{\,\prime})$ has a continuous extension to $x_0$ with
respect to $D\setminus E_0$ in terms of the space
$\bigl(\bigcup\limits_{i=1}^N\overline{D_i}_P, \rho\bigr).$ }
\end{theorem}

\medskip
\begin{proof}
Let $f\in{\frak OS}^{P, K, \varphi}_{E, E_0, \delta, Q}(D,
D^{\,\prime}).$ Now, there is a $K$-quasiregular mapping $\psi$ with
$N(\psi, D)\leqslant N_0$ and a homeomorphism $g$ in $D$ with
$g^{\,-1}\in W^{1, \varphi}_{\rm loc}(D^{\,\prime})$ such that $f=
g\circ \psi.$ By \cite[Lemma~4.9]{Sev$_2$}, $g$
satisfies~(\ref{eq2*A})--(\ref{eqA2}) at any point $y_0\in
\overline{D^{\,\prime}}.$ Since $\psi$ is $K$-quasiregular, $\psi$
satisfies~(\ref{eq22}) with $K_0=K.$ Thus, $f$
satisfies~(\ref{eq2*A})--(\ref{eqA2}) at any point $y_0\in
\overline{D^{\,\prime}}$ with $\widetilde{Q}=Q\cdot N_0\cdot K.$ The
desired conclusion follows from Theorem~\ref{th2}.~$\Box$
\end{proof}

\section{Examples}

\begin{example}\label{ex1}
Let $D={\Bbb D}$ be the unit disk. By the Riemannian mapping
theorem, there exists a conformal mapping $f_1$ of $D$ onto
$D_1:=\Pi\setminus\bigcup\limits_{k=2}^{\infty}I_k,$ where $\Pi=(0,
1)\times (0, 4\pi)$ and $I_k=\{z=(x, y)\in {\Bbb R}^2:
x=1/k,\,\,0<y\leqslant 1/2\},$ $k=2,3,\ldots$ (see
Figure~\ref{fig7}).
\begin{figure}
  \centering\includegraphics[scale=0.45]{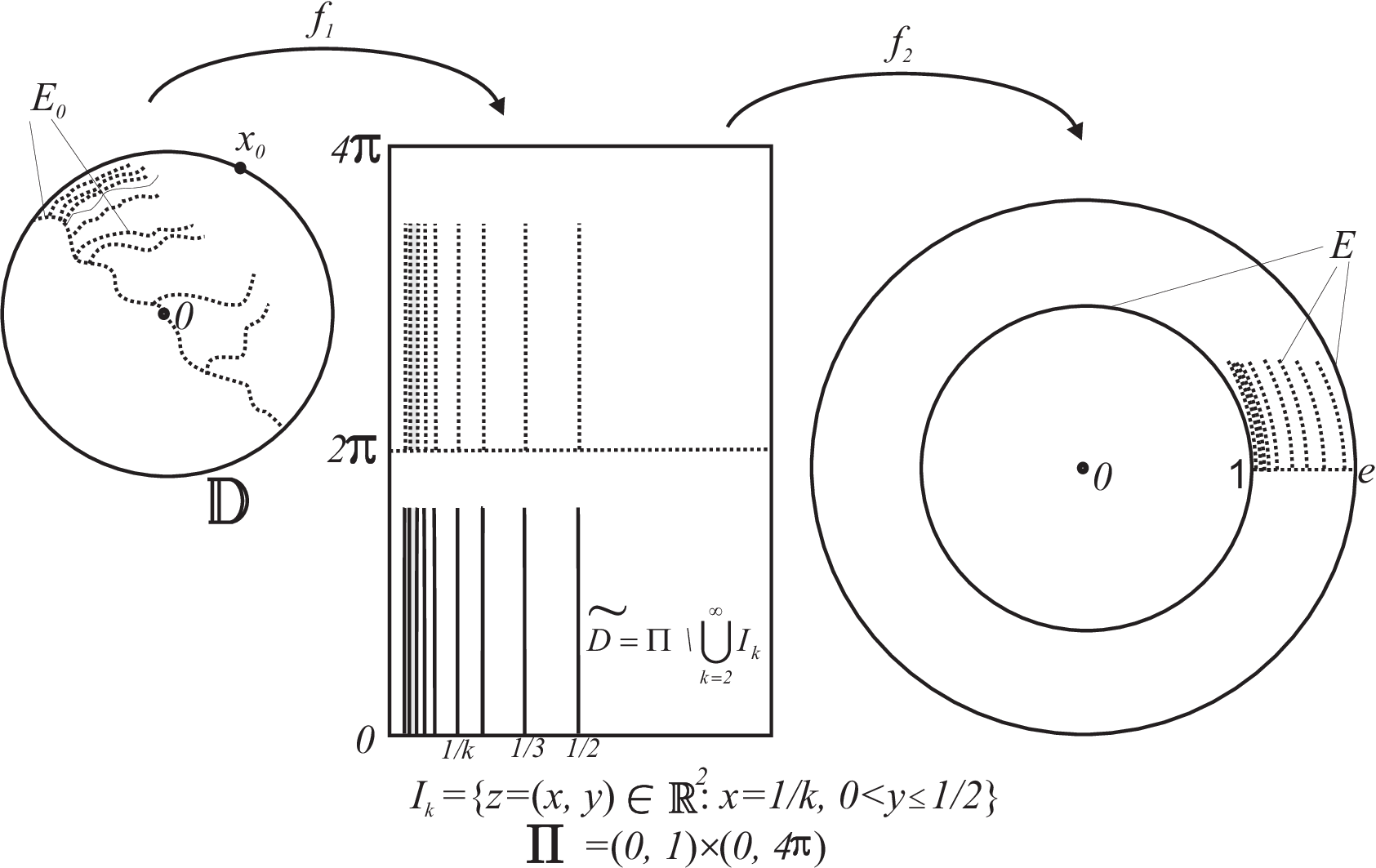}
  \caption{Illustration for the Example~\ref{ex1}}\label{fig7}
 \end{figure}
The mapping $f_2(z)=e^z$ maps $D_1$ onto the annulus
$D_2=\{1<|z|<e\},$ while $C(f_2, \partial D_1)=S(0, 1)\cup S(0,
e)\cup J_0\cup\bigcup\limits_{k=2}^{\infty}J_k,$ where
$J_0=\{z=x+iy: y=0, 1\leqslant x\leqslant e\}$ and $J_k=\{z\in {\Bbb
C}: z=re^{i\varphi}, r=e^{1/k}, 0\leqslant \varphi\leqslant 1/2\}.$
Set $E:=C(f_2, \partial D_1)$ and $D^{\,\prime}=D_2.$ Now $E\cap
D^{\,\prime}=J_0\cup\bigcup\limits_{k=2}^{\infty}J_k.$ We set
$f=f_2\circ f_1.$ Observe that, $E$ does not split $D^{\,\prime},$
$D^{\,\prime}$ is a regular domain and $f^{\,-1}_2(E\cap
D^{\,\prime})=L_0\cup \bigcup\limits_{k=2}^{\infty}L_k,$ where
$L_0=\{z=x+yi: y=2\pi, 0\leqslant x\leqslant 4\}$ and $L_k=\{z=(x,
y)\in {\Bbb R}^2: x=1/k,\,\,2\pi<y\leqslant 2\pi+1/2\},$
$k=2,3,\ldots .$ Now, $E_0:=f^{\,-1}_1(f^{\,-1}_2(E\cap
D^{\,\prime}))=f^{\,-1}(E\cap D^{\,\prime})$ is nowhere dense in $D$
because by \cite[Theorem~17.15]{Va} $f_1^{\,-1}$ has a continuous
extension to any point $y_0\in\overline{D_1}\setminus
(\bigcup\limits_{k=2}^{\infty}I_k\cup I_0),$ $I_0=\{z=x+iy: y=0,
0\leqslant x\leqslant 1\}.$ Consequently, $f_1^{\,-1}(L_0)$ and
$f_1^{\,-1}(L_k),$ $k=2,3, \ldots, $ are rectifiable paths,
therefore, their loci are nowever dense. Observe that, $f$ satisfies
the relations~(\ref{eq2*A})--(\ref{eqA2}) with $Q\equiv 1,$ see
Remark~\ref{rem1}. In turn, the function $Q$ satisfies the
relation~(\ref{eq6A}). Now, $f\in {\frak S}^P_{E, E_0 \delta, Q}(D,
D^{\,\prime})$ for $D,$ $D^{\,\prime},$ $Q,$ $E$ and $E_0$ mentioned
above. There are trivially exit $P=\{a_1\}$ and $\delta>0,$ where
$a_1\in D^{\,\prime}\setminus E$ may be chosen in arbitrary way, and
$\delta:=h(f^{\,\-1}(a_1), \partial {\Bbb D}).$ Finally, ${\Bbb D}$
has a weakly flat boundary (see \cite[Theorem~17.12]{Va}). All of
the conditions of Theorem~\ref{th2} hold for a family
$\{f\}=\widetilde{{\frak S}}^P_{E, E_0 \delta, Q}(D,
D^{\,\prime})\subset {\frak S}^P_{E, E_0 \delta, Q}(D,
D^{\,\prime})$ containing the unique mapping $f.$
\end{example}

\medskip
\begin{example}
Now, we construct a sequence $f_m,$ $m=1,\ldots,$ which forms some a
family of mappings ${\frak S}^P_{E, E_0, \delta, Q}(D,
D^{\,\prime})$ satisfying all conditions of Theorem~\ref{th2}. We
fix some a point $z_0\in D^{\,\prime}\setminus E$ and $0<r_0<{\rm
dist\,}(z_0, E\cup
\partial D^{\,\prime}).$ Set
$h(y)=\frac{r_0(y-z_0)}{|y-z_0|\log\frac{er_0}{|y-z_0|}}+z_0,$
$h(z_0)=z_0,$ $h(B(z_0, r_0))=B(z_0, r_0).$ Besides that, we set
$$h_m(y)=\begin{cases}h(z)\,,&1/m<|y-z_0|<r_0\,,\\
\frac{mr_0(y-z_0)}{\log(mer_0)}+z_0\,, &\text{otherwise\,\, in\,\,}
B(z_0, r_0)
\end{cases}\,\,,$$
Then $h^{\,-1}_m$ are well-defined in $B(z_0, r_0)$ and
$h^{\,-1}_m(B(z_0, r_0))=B(z_0, r_0).$ We define $h_m(z)=z$ in
$D^{\,\prime}\setminus B(z_0, r_0);$ observe that $h_m$ are
homeomorphisms in $D^{\,\prime}$ for each $m=1,2,\ldots.$ Let
$f_m:=h_m^{\,-1}\circ f.$ Reasoning similarly
to~\cite[Proposition~6.3]{MRSY}, we may show that $f_m$ satisfy the
relations~(\ref{eq2*A})--(\ref{eqA2}) in each point $z_0\in
\overline{D^{\,\prime}}$ for
\begin{equation}\label{eq5B}
Q=Q(y)=\begin{cases}\log\left(\frac{e}{r_0|y-z_0|}\right)\,, & z\in B(z_0, r_0)\,,\\
1\,,& z\in D^{\,\prime}\setminus  B(z_0, r_0)\,.\end{cases}
\end{equation}
Note that, $Q\in L^1(D^{\,\prime}).$ Indeed, by the Fubini theorem,
we have that
$$\int\limits_{B(z_0, r_0)}Q(y)\,dm(y)=\int\limits_0^1\int\limits_{S(z_0, r)}
\log\left(\frac{e}{|y-z_0|}\right)d\mathcal{H}^{1}(y)dr=$$
$$=2\pi\int\limits_0^1
r\log\left(\frac{e}{r}\right)\,dr\leqslant 2\pi
e\int\limits_0^1dr=2\pi e<\infty\,.$$
Obviously, $Q$ is integrable in $D^{\,\prime},$ as well. We observe
that the family $f_m,$ $m=1,2,\ldots,$ is containing in ${\frak
S}^P_{E, E_0, \delta, Q}(D, D^{\,\prime})$ for $D,$ $D^{\,\prime},$
$E$ and $E_0$ mentioned in Example~\ref{ex1}, while $Q$ is defined
in~(\ref{eq5B}). Indeed, since any $f_m,$ $m=1,2,\ldots,$ fix
infinitely many points in ${\Bbb D},$ we may choose some an element
$a_1\in D^{\,\prime}\setminus E$ and $\delta>0$ such that
$h(f_m^{\,-1}(a_1), \partial {\Bbb D})\geqslant \delta$ for all
$m\in {\Bbb N}.$ Set $P:=\{a_1\}.$ Thus, all $f_m\in{\frak S}^P_{E,
E_0, \delta, Q}(D, D^{\,\prime})$ for all $m=1,2,\ldots .$ Besides
that, all of conditions of Theorem~\ref{th2} hold.
\end{example}

Results for a continuous extension of mappings studied here were
published in \cite{SDK}. This paper is published in the preprint
form \cite{KS}.

\medskip
{\bf Declarations.}

\medskip
{\bf Funding.} The work was supported by the National Research
Foundation of Ukraine (Project ``Analogues of Carath\'{e}odory and
Koebe-Bloch theorems for Orlycz-Sobolev classes'', Project number
2025.02/0010).

\medskip
{\bf Conflicts of interest.} The author has no financial or
proprietary interests in any material discussed in this article.

\medskip
{\bf Availability of data and material.} The datasets generated
and/or analysed during the current study are available from the
corresponding author on reasonable request.

\medskip
{\bf \noindent Zarina Kovba} \\
Zhytomyr Ivan Franko State University,  \\
40 Velyka Berdychivs'ka Str., 10 008  Zhytomyr, UKRAINE \\
mazhydova@gmail.com

\medskip
\medskip
{\bf \noindent Evgeny Sevost'yanov} \\
{\bf 1.} Zhytomyr Ivan Franko State University,  \\
40 Velyka Berdychivs'ka Str., 10 008  Zhytomyr, UKRAINE \\
{\bf 2.} Institute of Applied Mathematics and Mechanics\\
of NAS of Ukraine, \\
19 Henerala Batyuka Str., 84 116 Slov'yansk,  UKRAINE\\
esevostyanov2009@gmail.com

\end{document}